\theoremstyle{plain}
\newtheorem{theorem}{Theorem}[section]
\theoremstyle{definition}
\newtheorem{definition}{Definition}[section]
\theoremstyle{plain}
\newtheorem{prop}{Proposition}[section]
\theoremstyle{plain}
\theoremstyle{plain}
\definecolor{Noir}{rgb}{0,0,0} 
\definecolor{Blanc}{rgb}{1,1,1} 
\definecolor{Gray}{rgb}{0.5,0.5,0.5} 
\definecolor{Rouge}{rgb}{0.8,0.1,0.1} 
\definecolor{DBleu}{RGB}{51,51,178} 
\definecolor{LBleu}{rgb}{0.85,0.85,1} 
\definecolor{Orange}{RGB}{255,140,0} 
\newcommand{\bcent}{\begin{center}} 
\newcommand{\ecent}{\end{center}} 
\newcommand{\benum}{\begin{enumerate}} 
\newcommand{\eenum}{\end{enumerate}} 
\newcommand{\bitem}{\begin{itemize}} 
\newcommand{\eitem}{\end{itemize}} 
\newcommand{\btab}{\begin{tabular}} 
\newcommand{\etab}{\end{tabular}} 
\newcommand{\beqn}{\begin{eqnarray}} 
\newcommand{\eeqn}{\end{eqnarray}} 
\newcommand{\bmath}{\begin{math}} 
\newcommand{\emath}{\end{math}} 
\newcommand{\noin}{\noindent} 
\providecommand{\tb}[1]{\textbf{#1}} 
\providecommand{\mb}[1]{\mathbf{#1}}
\newcommand{\ds}{\displaystyle}
\providecommand{\F}[1]{\mathbb{#1}}
\newcommand{\ZZ}{\mathbb{Z}}
\newcommand{\QQ}{\F{Q}} 
\newcommand{\RR}{\F{R}} 
\newcommand{\CC}{\F{C}}
\newcommand{\PP}{\F P}
\newcommand{\HH}{\mathbb H}
\newcommand{\plus}{\oplus} 
\newcommand{\Plus}{\bigoplus} 
\newcommand{\tensor}{\otimes}
\newcommand{\w}{\omega} 
\providecommand{\Cal}[1]{\mathcal{#1}} 
\newcommand{\CO}{\Cal O}
\newcommand{\calR}{\Cal R}
\newcommand{\calC}{\mathcal{C}}
\newcommand{\calM}{\mathcal{M}}
\newcommand{\calO}{\mathcal{O}}
\newcommand{\calE}{\mathcal{E}}
\newcommand{\calS}{\mathcal{S}}
\newcommand{\CD}{\Cal D}
\newcommand{\CH}{\Cal H}
\newcommand{\CM}{\Cal M} 
\newcommand{\CN}{\Cal N} 
\newcommand{\CP}{\Cal P} 
\newcommand{\ClC}{\Cal C}
\renewcommand{\epsilon}{\varepsilon}
\providecommand{\v}[1]{\vec{#1}}
\newcommand\varleq{\mathbin{\vcenter{\hbox{%
  \oalign{\hfil$\scriptstyle<$\hfil\cr 
          \noalign{\kern-.3ex} 
          $\scriptscriptstyle({-})$\cr}%
}}}} 
\renewcommand\subsetneq{\mathbin{\vcenter{\hbox{%
  \oalign{\hfil$\scriptstyle\subset$\hfil\cr 
          \noalign{\kern-.3ex} 
          $\scriptscriptstyle({-})$\cr}%
}}}} 
\author{Steven Rayan}
\address{Department of Mathematics \& Statistics\\
  McLean Hall, University of Saskatchewan\\
  Saskatoon, SK, Canada  ~S7N 5E6}
\email{rayan@math.usask.ca, ejs383@mail.usask.ca}
\author{Evan Sundbo}
\title[Twisted Argyle Quivers and Higgs Bundles]{Twisted Argyle Quivers and Higgs Bundles}
\date{\today}
\subjclass[2010]{14D20,14H60,14L24,16G20}
\begin{document}

\maketitle

\noin\tb{Abstract.}  Ordinarily, quiver varieties are constructed as moduli spaces of quiver representations in the category of vector spaces.  It is also natural to consider quiver representations in a richer category, namely that of vector bundles on some complex variety equipped with a fixed sheaf that twists the morphisms.  Representations of $A$-type quivers in this twisted category --- known in the literature as ``holomorphic chains'' --- have practical use in questions concerning the topology of the moduli space of Higgs bundles.  In that problem, the variety is a Riemann surface of genus at least $2$, and the twist is its canonical line bundle.  We extend the treatment of twisted $A$-type quiver representations to any genus using the Hitchin stability condition induced by Higgs bundles and computing their deformation theory.  We then focus in particular on so-called ``argyle quivers'', where the rank labelling alternates between $1$ and integers $r_i\geq1$.  We give explicit geometric identifications of moduli spaces of twisted representations of argyle quivers on $\PP^1$ using invariant theory for a non-reductive action via Euclidean reduction on polynomials.  This leads to a stratification of the moduli space by change of bundle type, which we identify with ``collision manifolds'' of invariant zeroes of polynomials.   We also relate the present work to Bradlow-Daskalopoulos stability and Thaddeus' pullback maps to stable tuples.  We apply our results to computing $\QQ$-Betti numbers of low-rank twisted Higgs bundle moduli spaces on $\PP^1$, where the Higgs fields take values in an arbitrary ample line bundle.  Our results agree with conjectural Poincar\'e series arising from the ADHM recursion formula.\\

\tableofcontents

\section{Introduction}

By now, quiver varieties are household names in mathematics, particularly when it comes to representation theory and geometry.  Most often they are formed by labelling the nodes of a directed graph with nonnegative integers and then considering linear representations up to isomorphism.  Here, we are referring to representations in the category of vector spaces: to each node, we assign a vector space (over $\CC$ or some other algebraically-closed field) of the prescribed dimension, and a corresponding linear map to each arrow.  Another common construction of quiver varieties is the Nakajima construction \cite{HK:94,HK:96}.  This also uses the category of vector spaces, but each arrow in the initial quiver is ``doubled'' by adding an arrow with the opposite orientation.  These are interpreted as cotangent directions to the space of representations of the original quiver.  The quiver variety is then constructed as a hyperk\"ahler quotient \cite{HKLR:87}, and thus provides an important class of examples of Calabi-Yau manifolds.  Broadly speaking, quiver varieties have applications to toric geometry, vertex algebras, noncommutative geometry, integrable systems, and gauge theories.

Quiver varieties can also be constructed in other categories.  The natural generalization of the category of vector spaces is the category $\mbox{Bun}(X)$ of vector bundles on a fixed complex variety $X$.  Here, we label each node with two integers, $r_i$ and $d_i$, where $r_i\geq0$.  The $r_i$ numbers specify the ranks of the corresponding vector bundles while the $d_i$'s fix their respective degrees.  If one node is represented by $U_i$ and another is represented by $U_j$ and there is an arrow between them, then we assign to that arrow a vector bundle morphism in $\mbox{Hom}(U_i,U_j)$.  One of the main applications of working in this setting is to the study of Higgs bundles.  A \emph{Higgs bundle} is a vector bundle $E\to X$ together with a regular $\CO_X$-linear map $\Phi:E\to E\tensor\w_x$ satisfying the integrability condition $\Phi\wedge\Phi=0$, where $\w_X$ is the canonical line bundle of $X$ \cite{NJH:86,TSCH:92}.  If we consider a ``twisted'' category $\mbox{Bun}(X,\w_X)$ where the morphisms are twisted by $\w_X$, we can pose the following alternative but equivalent definition:  A Higgs bundle is a representation of rank $r$ and degree $d$ of a single node, labelled $r,d$, with a loop.

What makes this useful and not just a ``rebranding'' is that the moduli space of Higgs bundles on $X$ comes with a natural linear action of $\CC^\times$ that rescales $\Phi$.  The cohomology of the moduli space localizes to the components of the fixed-point set \cite{NJH:86,TSCH:91,PBG:94,HT:03,GR:15}, which themselves are positive-dimensional due to the noncompactness of the moduli space.  The components are indexed by partitions of $r$ and $d$ into some number of parts that are strung out into an $A$-type quiver.  The fixed points themselves are representations of these quivers in $\mbox{Bun}(X,\w_X)$, and so the problem of computing topological invariants (e.g. rational Betti numbers) of the moduli space becomes a question about invariants of quiver varieties of $A$-type in $\mbox{Bun}(X,\w_X)$ \cite{PBG:95,SR:11,SR:17}.

Such representations of $A$-type quivers in a twisted category of bundles have acquired the name ``holomorphic chains'' and, in the particular case of the quiver $A_2$, ``holomorphic triples''.  When the quiver is not specifically $A$-type, the nomenclature ``quiver bundle'' has been used.  Given their importance to Higgs bundles, various aspects of the topology, geometry, and homological algebra of holomorphic chains and quiver bundles have been explored in recent years: \cite{PBG:95,PBG:01,AG:01,AG:03,BGG:04,GK:05,AS:05,AGS:06,GGM:07,AS:08,SR:11,AS:12,AS:13,GHS:14,SM:16,AS:17,SR:17,MS:17,BGGH:17,GN:17}, to name a few.

Higgs bundles are most often considered over algebraic curves --- where the integrability condition is trivial --- of genus $2$ or larger.  We will also restrict to representations of $A$-type quivers over complex algebraic curves, but make two generalizations: we consider arbitrary genus and we replace $\w_X$ with an arbitrary line bundle of nonnegative degree.  Given the connection to localization of Higgs bundles, we use the notion of stability induced by the Hitchin stability condition on Higgs bundles \cite{NJH:86}.

Our first result is a calculation of the generic dimension of the moduli space of representations of such a quiver (Theorem \ref{ThmDimn}) using spectral sequences and hypercohomology for the differential induced by the morphisms themselves, in concert with the \v Cech differential.  This is for any genus, any twisting line bundle $L$, and any $n$, where $n$ is the length of the quiver $A_n$.

Next, we specialize to a particular configuration for which we are able to obtain concrete results on the geometry of the moduli space:

\begin{definition}
An $A$-type quiver of the form
\begin{equation}\nonumber
 \bullet_{1,d_1} \longrightarrow \bullet_{r_2,d_2}\longrightarrow\bullet_{1,d_3}\longrightarrow\cdots\longrightarrow\bullet_{r_{n-1},d_{n-1}}\longrightarrow \bullet_{1,d_n}
\end{equation}
is called an \emph{argyle quiver}.  
\end{definition}

Our first result concerning this shape of quiver is Theorem \ref{ThmPullback}, which generalizes the relationship between holomorphic triples (representations of argyle quivers of length $n=2$, \cite{PBG:95,BGG:04}) and ``stable pairs'' \cite{MT:94} to general argyle quivers in the form of a generalized pullback diagram.  As part of this result, we show that the Hitchin stability condition always has a corresponding Bradlow-Daskalopoulos stability parameter (cf. \cite{BD:91}) and vice-versa, a correspondence that boils down to the fact that a particular linear system always has a unique solution.

While the twisted Higgs bundle moduli space can be constructed as a geometric-invariant-theoretic (GIT) quotient for a reductive group \cite{NN:91} (cf. also \cite{BGL:11}), the argyle quiver varieties themselves are non-reductive quotients, despite sitting as subvarieties in various twisted Higgs moduli spaces.  When $X=\PP^1$, the non-reductive contributions from the automorphism group act in a particular way, namely via Euclidean reduction on spaces of polynomials.  For this reason, we specialize in Section \ref{Sect1k1} to $X=\PP^1$ in order find explicit geometric identifications of the varieties associated to argyle quivers.  Our first result in this direction is Theorem \ref{Thm1k1}, which concerns argyle quivers of length $n=3$, which we also refer to as type $(1,k,1)$ to reflect the prescribed ranks of the three nodes.  Here, we remove the ``collision locus'' where zeroes of maps in a representation that are invariant under the action of automorphisms become coincident.  We fix the holomorphic type of the central bundle and then identify the projective closure of the collision-free subvariety of the moduli space with products of projective spaces and Grassmannians.  From this, we are able to state Theorem \ref{ThmArgyle}, which describes how to compute the moduli space associated to an arbitrary argyle quiver from $(1,k,1)$ pieces.  Finally, we give a number of examples of how strata are glued together by identifying $(1,k,1)$ quiver varieties for different holomorphic types as collision varieties of one another.

We conclude the present work with applications to the topology of twisted Higgs bundle moduli spaces on $\PP^1$ with a complete account of the rational Betti numbers in rank $2$ and any twisting line bundle $L=\CO(t)$, as well as examples in rank $3$.\\

\noin\emph{Acknowledgements.}  We thank Steven Bradlow, Jonathan Fisher, Peter Gothen, Laura Fredrickson, and Sergey Mozgovoy for useful conversations related to this work.   The first named author acknowledges the support of an NSERC Discovery Grant and a University of Saskatchewan New Faculty Recruitment Grant.  The second named author acknowledges the Department of Mathematics \& Statistics at the University of Saskatchewan for a Graduate Teaching Fellowship.  Both authors thank Laura Schaposnik for her hospitality and useful discussions during the Geometry and Physics of Higgs Bundles workshops at UIC, for which we gratefully acknowledge travel support from NSF grants DMS 1107452, 1107263, 1107367 ``RNMS: GEometric structures And Representation varieties (GEAR)''.

\section{Deformation theory}\label{SectDefs}

We write $\ClC = (U_1,\dots,U_n;\phi_1,\dots,\phi_{n-1})$ for a representation of a quiver $$Q=\bullet_{r_1,d_1}\longrightarrow\bullet_{r_2,d_2}\longrightarrow\dots\longrightarrow\bullet_{r_n,d_n}$$in the category $\mbox{Bun}(X,L)$, where $X$ is a fixed complex projective variety and $L$ is any sheaf on $X$, which we use to twist morphisms between bundles.  The morphisms are graded by exterior powers of $L$: $\mbox{Hom}^k(U,V)=\mbox{Hom}(U,V\tensor\wedge^kL)$.   We denote by $\calM_{X,L}(Q)$ the moduli space of representations of $Q$ in $\mbox{Bun}(X,L)$.  This is the set of representations $(U_1,\dots,U_n;\phi_1,\dots,\phi_{n-1})$ up to the following equivalence: $\phi_i \sim \phi_i'$ if there exists $\sigma\in\text{Aut}(U_{i+1})$ such that $\phi_i =\sigma\phi_i'$, and that $U_i \sim U'_i$ if there exists $\tau\in\text{Aut}(U_{i})$ such that $g_{\alpha\beta} = \tau_{\alpha\beta}^{-1}g'_{\alpha\beta}\tau_{\alpha\beta}$, where $g_{\alpha\beta}$ and  $g'_{\alpha\beta}$ are the transition functions of $U_i$ and $U'_i$ respectively. Then $\left(U_1,\ldots,U_n;\phi_1,\ldots,\phi_{n-1}\right)\sim\left(U_1',\ldots,U_n';\phi_1',\ldots,\phi_{n-1}'\right)$ if and only if $\phi_i \sim \phi_i'$ for all $i=1,\ldots,n-1$, and $U_i \sim U_i'$ for all $i=1,\ldots,n$.  Given a representation $\ClC$, let $\Phi_\ClC$ be the twisted endomorphism $E_\ClC\to E_\ClC\tensor L$ defined by $\Phi_\ClC=\sum\phi_i$, where $E_\ClC=\Plus U_i$.  The notion of stability in play is analagous to the usual Hitchin stability condition for Higgs bundles \cite{NJH:86}:

\begin{definition}
A subrepresentation $\CD$ of $\ClC$ is \emph{invariant} if $\Phi_\ClC(E_\CD)\subseteq E_\CD\otimes L$.  The \emph{slope} of $\CD$ is $\mu(\CD) = \ds\frac{\text{deg}(E_\CD)}{\text{rank}(E_\CD)}$.   A representation $\ClC$ is \emph{semistable} if $\mu(\CD) \leq \mu(\ClC)$ for all nonzero, proper invariant subrepresentations $\CD$ of $\ClC$.  We denote $\mu(\ClC)$ by $\mu_{tot}$.  If this inequality fails for some $\CD$ we say that the representation is \emph{unstable}, and if the inequality is strict for all such $\CD$,  then we say that $\ClC$ is \emph{stable}.
\end{definition}

We assume $r=\text{rank}(E_\ClC)$ and $d=\text{deg}(E_\ClC)$ coprime throughout so that the semistable representations are exactly the stable representations.  We note that the stability condition for quiver representations is not rigid: there are infinitely many stability conditions parametrized by $\RR^n$, where $n$ is the number of nodes in $Q$ (e.g. \cite{PBG:95}).  Because of the applications we have in mind, we choose the condition corresponding to Higgs bundles.

We begin by calculating the dimension of the tangent space $T_{\calC}\calM_{X,L}(Q)$, and hence the expected dimension of the moduli space, where the underlying graph of $Q$ is $A_n$ for some $n$ and where $L$ is any line bundle on a complex projective curve $X$ of any genus. We do this by calculating a certain hypercohomology $\HH^1$ at a point in $\calM_{X,L}(Q)$.  We note that our result in this section can be deduced from more general arguments in \cite{AS:08}.  In parallel, parabolic versions of this result can be found in \cite{BY:96,GGM:07}.

Given a choice of $\left(U_1,\ldots,U_n\right)$, we define vector spaces $V^{p,q}$  by
\begin{equation}
V^{p,q} =  H^p\left(\left(\bigoplus_{i=1}^{n-q}U_i^*\tensor U_{i+q}\right)\tensor\wedge^qL\right).\nonumber
\end{equation}
It is worth noting that $(\phi_1,\ldots\phi_{n-1})\in V^{0,1}$. We also define a differential $\delta_{\Phi}:V^{p,q}\to V^{p,q+1}$ by the following:
\begin{equation}\begin{split}
\delta_\Phi (\psi_1,&\ldots,\psi_{n-q}) =\\& (\psi_{2}\phi_1-\phi_{1+q}\psi_1,\mbox{ }\psi_{3}\phi_2-\phi_{2+q}\psi_2,\ldots,\psi_{n-q}\phi_{n-(q+1)}-\phi_{n-1}\psi_{n-(q+1)})\nonumber
\end{split}\end{equation}
($\delta_\Phi$ is named for its dependence on the total map $\Phi:=\bigoplus_{i=1}^{n-1} \phi_i$).
Now we have given $\bigoplus_{p,q} V^{p,q}$ the structure of a bi-graded Lie algebra, with $\delta_\Phi(-)$ being the Lie bracket.  The hypercohomology $\HH^1$ that we are looking for fits into an exact sequence
\begin{equation}
0\longrightarrow\calE^{0,1}_2 \longrightarrow \HH^1 \longrightarrow \calE^{1,0}_2 \longrightarrow \calE^{0,2}_2 \longrightarrow \HH^2\nonumber
\end{equation}
where
\begin{equation}
\calE^{p,q}_2 =  \frac{\text{ker}\big(V^{p,q} \xrightarrow[]{\mbox{ }\delta_\Phi\mbox{ }}V^{p,(q+1)}\big)}{\text{im}\big( V^{p,(q-1)} \xrightarrow[]{\mbox{ }\delta_\Phi\mbox{ }}V^{p,q}\big)}.\nonumber
\end{equation}
If $\HH^2 = 0$, then we say that the deformations are \emph{unobstructed}, and in this case it is clear that $$T_{\calC}\calM_{X,L}(Q) = \HH^1(\calC)=\calE^{0,1}_2 \oplus \calE^{1,0}_2.$$

\begin{prop}\label{HH^2=0}
If $X$ is a Riemann surface and $L$ is a line bundle, then the deformations of $\calM_{X,L}(Q)$ are unobstructed.
\end{prop}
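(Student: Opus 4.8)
The plan is to show that the obstruction space $\HH^2(\calC)$ vanishes by a dimension/vanishing argument on the relevant cohomology groups. From the spectral sequence producing $\HH^2$, the group $\HH^2(\calC)$ is built out of the pieces $\calE^{p,q}_2$ with $p+q=2$, namely $\calE^{2,0}_2$, $\calE^{1,1}_2$, and $\calE^{0,2}_2$. First I would observe that since $X$ is a Riemann surface, $H^p$ of any coherent sheaf vanishes for $p\geq 2$, so $V^{p,q}=0$ whenever $p\geq 2$; this immediately kills $\calE^{2,0}_2$ and, more importantly, shows the complex computing hypercohomology is concentrated in the two rows $p=0$ and $p=1$. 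Hence $\HH^2$ receives contributions only from $\calE^{1,1}_2$ and $\calE^{0,2}_2$.

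Next I would address $\calE^{0,2}_2$. Because $L$ is a line bundle on a curve, $\wedge^q L = 0$ for $q\geq 2$, so $V^{p,q}=0$ for all $q\geq 2$ as well. This makes $\calE^{0,2}_2$ vanish outright (its defining $V^{0,2}$ is zero), and likewise $\calE^{1,1}_2$ has vanishing outgoing differential target. So the only potentially nonzero contribution to $\HH^2$ is $\calE^{1,1}_2 = \ker\big(V^{1,1}\xrightarrow{\delta_\Phi} V^{1,2}\big)/\img\big(V^{1,0}\xrightarrow{\delta_\Phi}V^{1,1}\big)$, which (since $V^{1,2}=0$) is just the cokernel of $\delta_\Phi: V^{1,0}\to V^{1,1}$.

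The heart of the argument is then to show this cokernel vanishes, equivalently that $\delta_\Phi: V^{1,0}\to V^{1,1}$ is surjective. Here $V^{1,0}=H^1\big(\bigoplus_{i=1}^n U_i^*\tensor U_i\big)$ and $V^{1,1}=H^1\big(\bigoplus_{i=1}^{n-1}(U_i^*\tensor U_{i+1})\tensor L\big)$. The cleanest route is Serre duality: the map $\delta_\Phi$ on $H^1$ is dual to a corresponding map on $H^0$ of the Serre-dual sheaves, namely $\bigoplus_i \Hom(U_i,U_i\tensor\w_X)$ and $\bigoplus_i \Hom(U_{i+1},U_i\tensor L^*\tensor\w_X)$, and the transpose of $\delta_\Phi$ there is precisely the adjoint ``$\delta_\Phi$-type'' map built from composing with the $\phi_i$. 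Surjectivity of $\delta_\Phi$ on $H^1$ is then equivalent to injectivity of this transpose on $H^0$. One shows injectivity by a direct argument: an element $(\psi_1,\dots,\psi_n)$ in the kernel satisfies a chain of relations $\phi_i\psi_i = \psi_{i+1}\phi_i$ forcing the $\psi_i$ to be compatible with the quiver structure, i.e.\ to define an endomorphism of the representation $\calC$ twisted by $\w_X$; but stability (via coprimality of $r$ and $d$, so $\calC$ is stable hence simple) forces such twisted endomorphisms to vanish — or one invokes that the relevant Hom group is zero for degree reasons.

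I expect the main obstacle to be making the Serre-duality identification of the transpose of $\delta_\Phi$ completely precise — tracking signs and the direction of the shifted maps through the spectral sequence, and confirming that the kernel of the transposed map really does consist of (twisted) endomorphisms of $\calC$ so that simplicity/stability applies cleanly. An alternative that sidesteps this is a pure Euler-characteristic count: compute $\dim\HH^1 - \dim\HH^0 + \dim\HH^2$ via Riemann–Roch on the two-term complex, show directly that $\HH^0$ is as small as forced by simplicity, and conclude $\HH^2=0$; but this still ultimately rests on the same simplicity input, so the duality argument is the more structural presentation.
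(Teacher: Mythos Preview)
Your overall framework matches the paper's exactly: decompose $\HH^2$ into $\calE_2^{2,0}$, $\calE_2^{1,1}$, $\calE_2^{0,2}$; kill the first by $H^2=0$ on a curve and the last by $\wedge^2 L=0$ for a line bundle; then reduce $\calE_2^{1,1}$ to surjectivity of $\delta_\Phi\colon V^{1,0}\to V^{1,1}$ and attack this via Serre duality by showing the transpose is injective on $H^0$. Up to that point you and the paper agree.

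The gap is in your injectivity argument. The transpose $\delta_\Phi^*$ goes from the Serre dual of $V^{1,1}$ to the Serre dual of $V^{1,0}$, so its \emph{domain} consists of $(n-1)$-tuples $(\eta_1,\dots,\eta_{n-1})$ with $\eta_i\in H^0(U_i\otimes U_{i+1}^*\otimes L^{-1}\otimes\omega_X)$, not $n$-tuples $(\psi_1,\dots,\psi_n)$ of endomorphisms. The kernel condition is therefore not $\phi_i\psi_i=\psi_{i+1}\phi_i$, and elements of the kernel are not twisted endomorphisms of $\calC$ at all --- they are maps going the ``wrong way'' along the chain, from $U_{i+1}$ back to $U_i$. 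Simplicity of $\calC$ controls $\HH^0$ (the kernel of $\delta_\Phi$ on $V^{0,0}$), but says nothing directly about these backwards maps, so your appeal to stability does not close the argument. For the same reason your Euler-characteristic backup does not work as stated: knowing $\dim\HH^0$ from simplicity does not determine $\dim\HH^2$ unless you already know $\dim\HH^1$, which is what you are ultimately after.

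The paper's fix is purely formal and does not use stability. It writes $\delta_\Phi^*$ out explicitly as
\[
(\eta_1,\dots,\eta_{n-1})\;\longmapsto\;\bigl(\phi_1^*\eta_1^*,\ \phi_2^*\eta_2^*-\eta_1^*\phi_1^*,\ \dots,\ \phi_{n-1}^*\eta_{n-1}^*-\eta_{n-2}^*\phi_{n-2}^*,\ -\eta_{n-1}^*\phi_{n-1}^*\bigr),
\]
a map from an $(n-1)$-tuple to an $n$-tuple whose first and last components each involve only a single $\eta_i$; injectivity is then read off directly from this overdetermined staircase structure.
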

\begin{proof}
By the usual filtration, the space $\HH^2$ consists of contributions from three spaces (which we will show are all trivial): $\calE_2^{0,2}$, $\calE_2^{2,0}$, and $\calE_2^{1,1}$.  

Firstly, $L$ is a line bundle on a projective algebraic curve so $\wedge^2L = 0$. Therefore $\calE_2^{0,2}$, whose numerator consists of the kernel of $$H^0((U_1^*U_3\oplus\ldots\oplus U^*_{n-2}U_n)\tensor\wedge^2L)$$ under $\delta_\Phi$, is zero.

To see that $\calE_2^{2,0}$ is also zero, note that its numerator is the kernel of $$H^2(\text{End}U_1\oplus\ldots\oplus \text{End} U_n)$$ under $\delta_\Phi$, which is trivial on a curve.

Finally, we must deal with
\begin{equation}\begin{split}
\calE_2^{1,1} &= 
 \frac{\text{ker}\big(H^1((U_1^* U_2\oplus\ldots\oplus U^*_{n-1}U_n)\tensor L)\xrightarrow[]{\mbox{ }\delta_\Phi\mbox{ }} H^1((U_1^* U_3\oplus\ldots\oplus U^*_{n-2}U_n)\tensor \wedge^2L)\big)}{\text{im}\big(H^1\big(\text{End}U_1\oplus\ldots\oplus \text{End} U_n\big)\xrightarrow[]{\mbox{ }\delta_\Phi\mbox{ }}H^1((U_1^* U_2\oplus\ldots\oplus U^*_{n-1}U_n)\tensor L)\big)} \\
&= \frac{H^1((U_1^* U_2\oplus\ldots\oplus U^*_{n-1}U_n)\tensor L)}{\text{im}\big(H^1\big(\text{End}U_1\oplus\ldots\oplus \text{End} U_n\big)\xrightarrow[]{\mbox{ }\delta_\Phi\mbox{ }}H^1((U_1^* U_2\oplus\ldots\oplus U^*_{n-1}U_n)\tensor L)\big)}. \nonumber
\end{split}\end{equation}
If we can show that the map in the denominator is surjective, then we will have shown that $\calE_2^{1,1}$ is trivial.  To do this, consider the Serre-dual map
\begin{equation}
H^0((U_1^* U_2\oplus\ldots\oplus U^*_{n-1}U_n)^*\tensor L^* \tensor \omega)\xrightarrow[]{\mbox{ }\delta^*_\Phi\mbox{ }}H^0( (\text{End}U_1\oplus\ldots\oplus \text{End} U_n)^* \tensor \omega)\nonumber
\end{equation}
where $\omega$ is the canonical line bundle on $X$.  This map is equivalent to
\begin{equation}
H^0(U_1 U^*_2\oplus\ldots\oplus U_{n-1}U^*_n)\xrightarrow[]{\mbox{ }\delta^*_\Phi\mbox{ }}H^0( (\text{End}U_1\oplus\ldots\oplus \text{End} U_n) \tensor L).\nonumber
\end{equation}
The map $\delta^*_\Phi$ is injective if and only if $\delta_\Phi$ is surjective, and vice versa. We can calculate
\begin{equation}\begin{split}
\delta_{\Phi}^* (\eta_1,&\ldots,\eta_{n-1}) = \\&(\phi_1^*\eta_1^*,\mbox{ } \phi_2^*\eta_2^* - \eta_1^*\phi_1^*,\ldots,\phi_{n-1}^*\eta_{n-1}^* - \eta_{n-2}^*\phi_{n-2}^*,-\eta_{n-1}^*\phi_{n-1}^*)\nonumber
\end{split}\end{equation}
from which it is clear that the kernel of $\delta^*_{\Phi}$ is trivial, and thus $\delta_\Phi^*$ is injective and $\delta_\Phi$ is surjective.  Now the image of $$H^1(\text{End}U_1\oplus\ldots\oplus \text{End} U_n)$$ is $$H^1((U_1^* U_2\oplus\ldots\oplus U^*_{n-1}U_n)\tensor L)$$ and this tells us that $\calE_2^{1,1} = 0$, as required.

\end{proof}

This proposition gives us an inroad to calculating the expected dimension of $\calM_{X,L}(Q)$ (defined as the dimension of $\HH^1(U_1,\ldots,U_n;\phi_1,\ldots,\phi_{n-1})$). In particular,
\begin{equation}
\text{dim}\calM_{X,L}(Q) = e^2_{0,1}+ e^2_{1,0}\nonumber
\end{equation}
where
\begin{equation}\label{e01}
e^2_{0,1} =\text{dim}\left(
 \frac{H^0\big((U_1^* U_{2}\oplus\ldots\oplus U_{n-1}^*U_n)\tensor L\big)}{\text{im}\big(H^0\big( \text{End}U_1\oplus\ldots\oplus \text{End} U_n\big)\xrightarrow[]{\mbox{ }\delta_\Phi\mbox{ }}H^0\big((U_1^* U_{2}\oplus\ldots\oplus U_{n-1}^*U_n)\tensor L\big)\big)} \right)
\end{equation}
and
\begin{equation}\label{e10}
e^2_{1,0} = \text{dim}\left(\text{ker}\big(H^1\big( \text{End}U_1\oplus\ldots\oplus \text{End} U_n\big)\xrightarrow[]{\mbox{ }\delta_\Phi\mbox{ }} H^1\big((U_1^* U_{2}\oplus\ldots\oplus U_{n-1}^*U_n)\tensor L\big)\big)\right).
\end{equation}

\begin{theorem}\label{ThmDimn}
Given a quiver 
\begin{equation}
Q = \bullet_{r_1,d_1} \longrightarrow \bullet_{r_2,d_2}\longrightarrow\cdots\longrightarrow \bullet_{r_n,d_n}\nonumber
\end{equation}
the dimension of the moduli space of representations in the category of $L$-twisted vector bundles (with $L$ of degree $t$) over an algebraic curve $X$ of genus $g$  is
\begin{equation}\begin{split}\nonumber
 \sum_{i=1}^{n-1} \big(r_id_{i+1} -r_{i+1}d_i + r_ir_{i+1}t\big)+(1-g)\left(\sum_{i=1}^{n-1}r_ir_{i+1}-\sum_{i=1}^n r_i^2\right)+\min_{1\leq i \leq n}\{h^0(\emph{End}U_i)\}.
\end{split}\end{equation}
\end{theorem}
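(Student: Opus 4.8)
The plan is to compute $\HH^1(\calC)$ at a generic stable representation and convert it, through an Euler characteristic count, into the stated closed form. By Proposition \ref{HH^2=0} the deformations are unobstructed, so $\dim\calM_{X,L}(Q)=\dim\HH^1(\calC)=e^2_{0,1}+e^2_{1,0}$. The underlying object is the two-term complex $\left[\bigoplus_{i=1}^{n}\End U_i\xrightarrow{\delta_\Phi}\bigoplus_{i=1}^{n-1}U_i^*U_{i+1}\otimes L\right]$, whose hypercohomology vanishes in degrees $\ge 2$ (degree $2$ by Proposition \ref{HH^2=0}, higher degrees because $X$ is a curve), and whose spectral sequence degenerates at $E_2$; thus $\HH^0(\calC)=\calE^{0,0}_2=\ker\!\big(\delta_\Phi\colon H^0(\bigoplus_i\End U_i)\to H^0(\bigoplus_i U_i^*U_{i+1}\otimes L)\big)$, and (using $\calE^{0,2}_2=0$) the displayed exact sequence collapses to $0\to\calE^{0,1}_2\to\HH^1\to\calE^{1,0}_2\to 0$.

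First I would take the alternating sum of dimensions along the spectral sequence. Since that sum is already determined by the $E_1$-page and the complex has only two nonzero terms, $\dim\HH^0-\dim\HH^1=\chi\big(\bigoplus_{i}\End U_i\big)-\chi\big(\bigoplus_{i=1}^{n-1}U_i^*U_{i+1}\otimes L\big)$; the same identity can be obtained directly by combining \eqref{e01} and \eqref{e10} via rank--nullity together with the surjectivity of $\delta_\Phi$ on $H^1$ proved inside Proposition \ref{HH^2=0}. I would then apply Riemann--Roch on the genus-$g$ curve $X$: since $\deg\End U_i=0$ one has $\chi(\End U_i)=r_i^2(1-g)$, and since $\deg(U_i^*\otimes U_{i+1}\otimes L)=r_id_{i+1}-r_{i+1}d_i+r_ir_{i+1}t$ one has $\chi(U_i^*U_{i+1}\otimes L)=r_id_{i+1}-r_{i+1}d_i+r_ir_{i+1}t+r_ir_{i+1}(1-g)$. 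Summing and rearranging gives $\dim\HH^1=\dim\HH^0+\sum_{i=1}^{n-1}\big(r_id_{i+1}-r_{i+1}d_i+r_ir_{i+1}t\big)+(1-g)\big(\sum_{i=1}^{n-1}r_ir_{i+1}-\sum_{i=1}^{n}r_i^2\big)$, which is the asserted formula once $\dim\HH^0=\min_{1\le i\le n}\{h^0(\End U_i)\}$ is known.

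It remains to identify $\HH^0(\calC)$, which by the above consists of tuples $(\psi_1,\dots,\psi_n)$, $\psi_i\in H^0(\End U_i)$, satisfying $\phi_i\psi_i=\psi_{i+1}\phi_i$ for each $i$. For the bound $\dim\HH^0\le\min_i h^0(\End U_i)$ I would show that stability forces each evaluation $\HH^0\to H^0(\End U_j)$, $(\psi_\bullet)\mapsto\psi_j$, to be injective: if $\psi_j=0$ while $(\psi_\bullet)\neq 0$, then the kernels $K_i=\ker\psi_i$ and the saturated images $F_i=\overline{\img\,\psi_i}$ form invariant subrepresentations $\CK$ and $\CF$ (since $\phi_i(K_i)\subseteq K_{i+1}\otimes L$ and $\phi_i(\img\psi_i)\subseteq(\img\psi_{i+1})\otimes L$), with $\CK$ nonzero and proper because $K_j=U_j$ and $\CF$ nonzero and proper because $F_j=0$; stability then forces $\mu(\CK)<\mu_{tot}$ and $\mu(\CF)<\mu_{tot}$, whereas the exact sequences $0\to K_i\to U_i\to\img\psi_i\to 0$ and the inclusions $\img\psi_i\subseteq F_i$ give $\text{rank}(E_\CK)+\text{rank}(E_\CF)=\sum_i r_i$ and $\deg(E_\CK)+\deg(E_\CF)\ge\sum_i d_i$, contradicting the two slope inequalities. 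For the reverse bound, at a generic stable representation I would show the evaluation at the node $i_0$ achieving the minimum is onto, by transporting an arbitrary $\psi_{i_0}\in H^0(\End U_{i_0})$ leftward and rightward through the maps $\phi_i$ and verifying, from their genericity, that the resulting family $(\psi_i)$ is well defined.

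The main obstacle is this last point: bounding $\HH^0$ from below uniformly over all rank patterns $(r_1,\dots,r_n)$. When some $\phi_i$ is neither generically injective nor generically surjective it cannot simply be inverted to carry $\psi_{i_0}$ across the quiver, so one must check that the intertwining relations still have solution space of dimension exactly $h^0(\End U_{i_0})$ on a dense set of stable representations --- which also requires pinning down precisely which splitting types of the $U_i$ and which morphisms $\phi_i$ are to be regarded as generic. By comparison, the unobstructedness input (Proposition \ref{HH^2=0}), the Euler characteristic bookkeeping, and the Riemann--Roch computation are routine.
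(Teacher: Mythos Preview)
Your argument is essentially the paper's, repackaged through the Euler characteristic of the two-term deformation complex rather than computing $e^2_{0,1}$ and $e^2_{1,0}$ separately; the Riemann--Roch bookkeeping is identical. The only substantive difference is in how you pin down $\dim\HH^0=\dim\ker\big(\delta_\Phi|_{H^0}\big)$. For the upper bound you use stability (kernel and saturated-image subrepresentations), which is a cleaner argument than what the paper does: the paper instead observes directly from the shape of $\delta_\Phi(\psi_1,\dots,\psi_n)=(\psi_2\phi_1-\phi_1\psi_1,\dots,\psi_n\phi_{n-1}-\phi_{n-1}\psi_{n-1})$ that setting $\psi_{i_0}=0$ forces the rest to vanish (so the projection $\ker\delta_\Phi\to H^0(\End U_{i_0})$ is injective for every $i_0$), and then says one should remove the $\psi_{i_0}$ of minimal $h^0(\End U_{i_0})$ to keep the image dimension unchanged. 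As for the lower bound you flag as the main obstacle, the paper does not do anything more rigorous than you: it simply asserts, from the same inspection of $\delta_\Phi$, that the map is ``$\min_i\{h^0(\End U_i)\}$-far away from being injective'', tacitly assuming the $\phi_i$ are generic enough that left- and right-composition by them are injective on endomorphisms. So your concern is real but is not something the paper resolves either; it is treated there as part of the ``expected dimension'' philosophy.
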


\begin{proof}
We have from Proposition~\ref{HH^2=0} that $\text{dim}\calM_{X,L}(Q) = e^2_{0,1}+ e^2_{1,0}$, where $e^2_{0,1}$ and $e^2_{1,0}$ are given by equations (\ref{e01}) and (\ref{e10}), respectively.  We also showed in the proof that the map $$H^1\big( \text{End}U_1\oplus\ldots\oplus \text{End} U_n\big)\xrightarrow[]{\mbox{ }\delta_\Phi\mbox{ }} H^1\big((U_1^* U_{2}\oplus\ldots\oplus U_{n-1}^*U_n)\tensor L\big)$$ is surjective; thus we can say
\begin{equation}
e^2_{1,0}=h^1\big( \text{End}U_1\oplus\ldots\oplus \text{End} U_n\big) - h^1\big((U_1^* U_{2}\oplus\ldots\oplus U_{n-1}^*U_n)\tensor L\big)\nonumber
\end{equation}
and a similar argument will allow us to analyze $e^2_{0,1}$. We would like to say that $$ H^0\big( \text{End}U_1\oplus\ldots\oplus \text{End} U_n\big)\xrightarrow[]{\mbox{ }\delta_\Phi\mbox{ }} H^0\big((U_1^* U_{2}\oplus\ldots\oplus U_{n-1}^*U_n)\tensor L\big)$$ was injective, but this is not quite true.  By inspecting the map
\begin{equation}\begin{split}
\delta_\Phi (\psi_1,&\ldots,\psi_{n}) =\\& (\psi_{2}\phi_1-\phi_{1}\psi_1,\mbox{ }\psi_{3}\phi_2-\phi_{2}\psi_2,\ldots,\psi_{n}\phi_{n-1}-\phi_{n-1}\psi_{n-1})\nonumber
\end{split}\end{equation}
it can be seen that it is injective only if we except one of the terms $\psi_i$.  Ignoring an arbitrary $\psi_i$ would result in a map that was injective but may not have an image of the same dimension as the full $\delta_\Phi$ map.  We must ignore a $\psi_i$ coming from $H^0(\text{End}U_i)$ of minimal dimension.  If there are more than one having the same (minimal) dimension, then it will not matter which we remove, as the resulting dimensions will be the same.   That is, we can think of $\delta_\Phi$ as being $\min_{1\leq i \leq n}\{h^0(\text{End}U_i)\}$-far away from being injective.  This tells us that
\begin{equation}
e^2_{0,1}  = h^0\big((U_1^* U_{2}\oplus\ldots\oplus U_{n-1}^*U_n)\tensor L\big)
-h^0\big( \text{End}U_1\oplus\ldots\oplus \text{End} U_n\big)+\min_{1\leq i \leq n}\{h^0(\text{End}U_n)\}.\nonumber
\end{equation}

Now we can apply Riemann-Roch to $e^2_{0,1}+ e^2_{1,0}$ to obtain 
\begin{equation}\begin{split}\label{e01+e10}
&e^2_{0,1}+ e^2_{1,0} \\
&\qquad
=\text{deg}\big((U_1^* U_{2}\oplus\ldots\oplus U_{n-1}^*U_n)\tensor L\big)+ \text{rank}\big((U_1^* U_{2}\oplus\ldots\oplus U_{n-1}^*U_n)\tensor L\big)(1-g)
\\
& \qquad\qquad- \text{deg}\big( \text{End}U_1\oplus\ldots\oplus \text{End} U_n\big)- \text{rank}\big( \text{End}U_1\oplus\ldots\oplus \text{End} U_n\big)(1-g)
\\
&\qquad\qquad\qquad+\min_{1\leq i \leq n}\{h^0(\text{End}U_n)\}
\\
&\qquad=\sum_{i=1}^{n-1}\text{deg}(U_i^*U_{i+1}L) +(1-g)\sum_{i=1}^{n-1}\text{rank}(U_i^*U_{i+1}L) \\&\qquad\qquad-\sum_{i=1}^{n}\text{deg}(\text{End}(U_i)) - (1-g)\sum_{i=1}^{n}\text{rank}(\text{End}(U_i)) +\min_{1\leq i \leq n}\{h^0(\text{End}U_n)\}
\\
&\qquad=\sum_{i=1}^{n-1}\text{deg}(U_i^*U_{i+1}L) + (1-g)\Big(\sum_{i=1}^{n-1}r_ir_{i+1}-\sum_{i=1}^n r_i^2\Big)+\min_{1\leq i \leq n}\{h^0(\text{End}U_n)\}.
\end{split}\end{equation}
It remains to calculate $\text{deg}(U_i^*U_{i+1}L)$. Note that the following calculation also serves to demonstrate that the dimension of the moduli space only depends on the degrees and ranks of the $U_i$, not on their specific structures (how they may split, etc.). We decompose the determinant of $U_i^*U_{i+1}L$ as follows:
\begin{equation}\begin{split}
\text{det}(U_i^*U_{i+1}L) &= \text{det}(U_i^*)^{\otimes r_{i+1}} \otimes\text{det}(U_{i+1}L)^{\otimes r_i}
\\
&=\text{det}(U_i^*)^{\otimes r_{i+1}} \otimes\text{det}(U_{i+1})^{\otimes r_i}\otimes\text{det}(L)^{\otimes r_ir_{i+1}}\nonumber
\end{split}\end{equation}
thus
\begin{equation}\begin{split}
\text{deg}(U_i^*U_{i+1}L) 
&=\text{deg}(\text{det}(U_i^*U_{i+1}L))
\\
&=r_{i+1}\text{deg}(U_i^*) + r_i\text{deg}(U_{i+1}) + r_ir_{i+1}\text{deg}(L)
\\
&= r_id_{i+1}-r_{i+1}d_i+r_ir_{r+1}t.\nonumber
\end{split}\end{equation}
This calculation along with equation~(\ref{e01+e10}) gives the result.
\end{proof}

\section{Pullback diagrams and stability}\label{SectPullStab}

\subsection{Stable tuples}
Given an argyle quiver $Q$ of length $n=2q+1$, we will need to consider a space of \emph{stable $4q$-tuples}, analagous to the stable pairs studied by Thaddeus.  Stability for these tuples depends on $2q$ parameters, which we denote by $\sigma=(\sigma_1,\ldots,\sigma_{2q})\in\RR^{2q}$.  Accordingly, we will define a space
\begin{equation}
\calR^{\sigma}_{X}(k_1,\dots,k_{2q};e_1,\dots,e_{2q}),\nonumber
\end{equation}
that parametrizes stable tuples of the form $\big\{(V_1,\ldots,V_{2q};\phi_1,\ldots,\phi_{2q})\big\}$, where $V_i$ is a vector bundle of rank $k_i$ and degree $e_i$ (with $k_i=1$ whenever $i$ is even), $\phi_i \in H^0(X,V_i)$ for $i$ odd, and $\phi_i \in H^0(X,V_{i-1}^*V_i)$ for $i$ even.

The stability condition arising from the choice of $\sigma$ follows from the well-known $\alpha$-stability condition on the space of holomorphic chains  (equivalently, the moduli space of representations of the quiver $Q$).  This space is 
\begin{equation}
\calM_X^\alpha(r_1,\ldots,r_{n};d_1,\ldots,d_n) =  \big\{(U_1,\ldots,U_n;\phi_1,\ldots,\phi_{n-1})\big\}.\nonumber
\end{equation}
with rk$(U_i)= r_i$, $\text{deg}(U_i) = d_i$, and $\phi_i \in H^0(X,U_i^*U_{i+1}L)$.  The $\alpha$-slope of a holomorphic chain $\calC =(U_1,\ldots,U_n;\phi_1,\ldots,\phi_{n-1})$ depends on the $2q$-tuple $\alpha = (\alpha_1,\ldots,\alpha_{n-1}) \in \RR^{2q}$, and is defined as 
\begin{equation}
\mu_{\alpha}(\calC) = \frac{\sum_{i=1}^n d_i+ \sum_{i=1}^{n-1}\alpha_ir_{i+1}} {\sum_{i=1}^n r_i}\nonumber
\end{equation}

We say that a holomorphic chain $\calC\in \calM^\alpha_X(r_1,\ldots,r_{n};d_1,\ldots,d_n)$ is $\alpha$-stable if $\mu_{\alpha}(\calC') < \mu_{\alpha}(\calC)$ for each proper, $(\phi_1\oplus\ldots\oplus\phi_{n-1})$-invariant subchain $\calC'\subset \calC$.  Now we will play with this a little bit: recall the usual slope $\mu(\calC) = \frac{d}{r}$ and set
\begin{equation}\begin{split}
\alpha_i = \frac{r}{r_{i+1}}\big(\sigma_i - \frac{1}{n-1}\mu(\calC)\big)\nonumber
\end{split}\end{equation}
for all $i = 1,\ldots, {n-1}$. Now the expression $\mu_{\vec{\alpha}}(\calC') < \mu_{\vec{\alpha}}(\calC)$ becomes
\begin{equation*}
\frac{d' + \sum_{i=1}^{n-1}r'_{i+1}\frac{r}{r_{i+1}}\big(\sigma_i - \frac{1}{n-1}\mu(\calC)\big) }{r'} < \frac{d +  \sum_{i=1}^{n-1}r_{i+1}\frac{r}{r_{i+1}}\big(\sigma_i - \frac{1}{n-1}\mu(\calC)\big)}{r}
\end{equation*}
\begin{equation*}
\mu(\calC') +  \sum_{i=1}^{n-1}\frac{r}{r'}\frac{r'_{i+1}}{r_{i+1}}\big(\sigma_i - \frac{1}{n-1}\mu(\calC)\big) < \mu(\calC) + \sum_{i=1}^{n-1}\big(\sigma_i - \frac{1}{n-1}\mu(\calC)\big) 
\end{equation*}
\begin{equation*}
\mu(\calC') < \sum_{i=1}^{n-1}\frac{r}{r'}\frac{r'_{i+1}}{r_{i+1}}\big( \frac{1}{n-1}\mu(\calC)-\sigma_i\big)+\sum_{i=1}^{n-1}\sigma_i
\end{equation*}
\begin{equation}\label{stablepent}
d' < \frac{d}{n-1}\sum_{i=1}^{n-1}\frac{r'_{i+1}}{r_{i+1}} +\sum_{i=1}^{n-1}\sigma_i \left(r'-r\frac{r'_{i+1}}{r_{i+1}}\right).
\end{equation}

This is the $\sigma$-stability condition for holomorphic chains of length $n$.  To specialize to $\calR^{\sigma}_{X}(k_1,\dots,k_{2q};e_1,\dots,e_{2q})$, we need to focus on chains of the form$$\calC=(\calO,U_2,\ldots,U_n;\phi_1,\ldots,\phi_{n-1}),$$where $U_i$ is a line bundle for each $i$ odd.  This can be viewed as a $4q$-tuple in $\calR^{\sigma}_{X}(k_1,\dots,k_{2q};e_1,\dots,e_{2q})$ for $k=r-1$, $k_i = r_{i+1}$, and $e_i = d_{i+1}$.  Since we have set $d_1 = d'_1 = 0$, we have $d=e$ and can write~(\ref{stablepent}) as
\begin{equation}
e' < \frac{e}{2q}\sum_{i=1}^{2q}\frac{k'_i}{k_i} + \sum_{i=1}^{2q}\sigma_i\left(r'-(k+1)\frac{k'_i}{k_i}\right).\nonumber
\end{equation}
Now, this expression still depends explicitly on $r'_1$, which is certainly strange if we are trying to look at this as a stability condition for a $4q$-tuple.  To remedy this, we note that if $\phi_1\in H^0(X,U'_2)\setminus\{0\}$, then it is clear that $r'_1 = 1$.  Conversely, if $\phi_1 \not\in H^0(X,U'_2)\setminus\{0\}$, then we must have $r'_1=0$.  Hence, the stability condition on $\calR^{\sigma}_{X}(k_1,\dots,k_{2q};e_1,\dots,e_{2q})$ settles nicely into two cases:

\begin{definition}
A $4q$-tuple $(V_1,\ldots,V_{2q};\phi_1,\ldots,\phi_{2q})$ with $\text{rk}(V_i) = r_i$ and $\text{deg}(V_i) = e_i$ is \emph{stable} if for every sub-$4q$-tuple $(V'_1,\ldots,V'_{2q};\phi'_1,\ldots,\phi'_{2q})$ of $(V_1,\ldots,V_{2q};\phi_1,\ldots,\phi_{2q})$ where we denote $\text{rk}(V'_i) = k'_i$ and $\text{deg}(V'_i) = e_i'$, we have
$$\begin{array}{ccc}
\displaystyle e' < \frac{e}{n-1}\sum_{i=1}^{2q}\frac{k'_i}{k_i} +\sum_{i=1}^{2q}\sigma_i\left(1+k'-(k+1)\frac{k'_i}{k_i}\right) & \mbox{ if } & \phi_1 \in H^0(X,V'_1)\setminus \{0\}\\\\
\displaystyle e' < \frac{e}{n-1}\sum_{i=1}^{2q}\frac{k'_i}{k_i} +\sum_{i=1}^{2q}\sigma_i\left(k'-(k+1)\frac{k'_i}{k_i}\right) & \mbox{ if } & \phi_1 \not\in H^0(X,V'_1)\setminus \{0\}.
\end{array}$$
\end{definition}

\subsection{Pullback diagrams}

The connection between twisted representations of an argyle quiver on the Riemann surface $X$ and $4q$-tuples is captured by the following result:

\begin{theorem}\label{ThmPullback}
For a labelled argyle quiver $Q$ of length $n=2q+1$
\begin{equation}\nonumber
 \bullet_{1,d_1} \longrightarrow \bullet_{r_2,d_2}\longrightarrow\bullet_{1,d_3}\longrightarrow\cdots\longrightarrow\bullet_{r_{n-1},d_{n-1}}\longrightarrow \bullet_{1,d_n}
\end{equation}
there exists a unique $\sigma\in\RR^{2q}$ and $b_i\in\ZZ$ such that the moduli space of representations of Q in the twisted category of holomorphic vector bundles with fixed determinant $P$ is given by the pullback diagram

\begin{equation}\label{pullback}\nonumber
\begin{tikzpicture}
    \node (a) at (0,0){$\calM_{X,L,P}(Q)$};
 \node[right=3cm of a] (b){
$\prod\limits_{i=1,\text{\emph{odd}}}^n \text{\emph{Jac}}^{d_i}(X)$
 };
\node[below = 1.83cm of a](c){ $\calR^\sigma_{X}(k_1,\dots,k_{2q};e_1,\dots,e_{2q})$};
\node[below = 1.3cm of b](d){$\prod\limits_{i=1,\text{\emph{odd}}}^n \text{\emph{Jac}}^{b_i}(X)$};

 \draw[->](a) -- (b)  node[pos=0.5,above]{$g$};
 \draw[->](a) -- (c)  node[pos=0.5,left]{$\pi$};
 \draw[->](c) -- (d)  node[pos=0.5,above]{$h$};
 \draw[->](b) -- (d)  node[pos=0.5,left]{$\pi'$};
\end{tikzpicture}
\end{equation}

with maps described as follows: 
\begin{equation*}\begin{split}
&\pi:(U_1,\ldots,U_n;\phi_1,\ldots,\phi_{n-1})
\\
&\qquad\mapsto (U_1^*U_2L,U_1^*U_3L^2,U_3^*U_4L,U_3^*U_5L^2,\ldots,U^*_{n-2}U_nL^2;\phi_1,\ldots,\phi_{n-1})\\
&g:(U_1,\ldots,U_n;\phi_1,\ldots,\phi_{n-1})\mapsto(U_1,U_3,\ldots,U_n)\\
&h:(V_1,\ldots,V_{2q};\phi_1,\ldots,\phi_{n-1})
\\
&\qquad\mapsto \Bigg(\bigotimes_{i=1, \text{\emph{odd}}}^{2q-1}\det(V_i),\bigotimes_{i=1, \text{\emph{odd}}}^{2q-3}\det(V_i) \otimes\det(V_{2q-1}V^*_{2q}),\ldots
, \bigotimes_{i=1, \text{\emph{odd}}}^{2q-1}\det(V_iV^*_{i+1})\Bigg)\\
&\pi':(U_1,U_3,\ldots,U_n)
\\
&\qquad\mapsto \Big(PL^{\sum_{i=2,\text{\emph{even}}}^{2q}r_i}(U_1^*)^{r_2+1}(U^*_3)^{r_4+1}\ldots U_{2q+1}^*,
\\
&\qquad\qquad PL^{\sum_{i=2,\text{\emph{even}}}^{2q-2}r_i-r_{2q}}(U_1^*)^{r_2+1}\ldots U^*_{2q-1} (U_{2q+1}^*)^{r_{2q-1}+1},\ldots
\\
&\qquad\qquad\ldots ,PL^{-\sum_{i=2,\text{\emph{even}}}^{2q}r_i}U_1^*(U_3^*)^{r_2+1}\ldots (U_{2q+1}^*)^{r_{2q-1}+1}\Big)
\end{split}\end{equation*}

Moreover, the maps $\pi$ and $\pi'$ are finite-to-one covering maps.
\end{theorem}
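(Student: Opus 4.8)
The plan is to show that $(\pi,g)$ is an isomorphism from $\calM_{X,L,P}(Q)$ onto the stated fibre product $\calR^{\sigma}_{X}(\cdots)\times_{\prod_i\Jac^{b_i}(X)}\prod_i\Jac^{d_i}(X)$ by exhibiting its inverse, and then to read the covering property off the shape of $\pi'$. (That the square commutes, so that $(\pi,g)$ does land in the fibre product, will follow from the $q+1$ line-bundle relations below, which every genuine representation satisfies by construction.) Since $r$ and $d$ are coprime, all four corners of the square are fine moduli spaces, so it is enough to check that the reconstruction below inverts $(\pi,g)$ functorially in the test scheme; alternatively, since $\calM_{X,L,P}(Q)$ is smooth by the unobstructedness in Proposition~\ref{HH^2=0} and --- once $\pi'$ is shown \'etale below --- so is the fibre product, one may check bijectivity on $\CC$-points and invoke Zariski's Main Theorem. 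The inverse is the forced reconstruction: given a tuple $(V_1,\dots,V_{2q};\phi_1,\dots,\phi_{n-1})$ and line bundles $(U_1,U_3,\dots,U_n)$ of the prescribed degrees $d_1,d_3,\dots,d_n$, put $U_{2j}:=U_{2j-1}\otimes V_{2j-1}\otimes L^{-1}$ for $j=1,\dots,q$. Then $V_{2j-1}=U_{2j-1}^{*}U_{2j}L$, so each $\phi_{2j-1}\in H^0(V_{2j-1})=H^0(U_{2j-1}^{*}U_{2j}L)$ is automatically a morphism $U_{2j-1}\to U_{2j}\otimes L$, while $\phi_{2j}\in H^0(V_{2j-1}^{*}V_{2j})$ is a morphism $U_{2j}\to U_{2j+1}\otimes L$ exactly when $V_{2j-1}^{*}V_{2j}\cong U_{2j}^{*}U_{2j+1}L$ --- which, cancelling $U_{2j}$ and using $V_{2j-1}=U_{2j-1}^{*}U_{2j}L$, is precisely the condition $V_{2j}\cong U_{2j-1}^{*}U_{2j+1}L^{\otimes 2}$.

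Thus a pair (tuple, odd bundles) reconstructs to an honest $L$-twisted representation of $Q$ of determinant $P$ precisely when the $q+1$ line-bundle isomorphisms $V_{2j}\cong U_{2j-1}^{*}U_{2j+1}L^{\otimes 2}$ ($j=1,\dots,q$) and $\det\big(\bigoplus_{i=1}^{n}U_i\big)\cong P$ all hold. Using $\det U_{2j}=U_{2j-1}^{\otimes r_{2j}}\otimes\det V_{2j-1}\otimes L^{-r_{2j}}$, telescoping turns the determinant condition into $\bigotimes_{j=1}^{q}\det V_{2j-1}\cong P\otimes L^{\sum_{i\ \mathrm{even}}r_i}\otimes(U_1^{*})^{r_2+1}\otimes\cdots\otimes(U_{2q-1}^{*})^{r_{2q}+1}\otimes U_{2q+1}^{*}$, which is exactly the equality of first components of $h$ and $\pi'$; substituting this relation into the $V_{2j}$-isomorphisms yields the remaining $q$ component equalities of $h=\pi'$. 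Hence the system $h=\pi'$ is $\mathrm{GL}_{q+1}(\ZZ)$-equivalent to the displayed system of $q+1$ relations, the two sides of each component have equal degree, and those common degrees are the integers $b_i$ --- forced, hence unique.

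For the stability correspondence: the Hitchin stability of a representation of $Q$ (invariant subrepresentations of slope $<\mu_{tot}$, as in Section~\ref{SectDefs}) is the $\mu_\alpha$-stability of the associated chain for the $\alpha$ that makes the $\mu_\alpha$-inequality on invariant subchains degenerate to the plain slope inequality; under the normalization of Section~\ref{SectPullStab} this is $\sigma_i\equiv\tfrac{1}{n-1}\mu_{tot}$, the unique such $\sigma$ because the substitution $\alpha_i=\tfrac{r}{r_{i+1}}\big(\sigma_i-\tfrac{1}{n-1}\mu_{tot}\big)$ is invertible --- the ``linear system with a unique solution'' of the introduction. By the computation of Section~\ref{SectPullStab} ending at~(\ref{stablepent}) and the two-case condition following it, this $\sigma$-stability of the $4q$-tuple corresponds under $\pi$ to Hitchin stability of the representation, and by coprimality semistable $=$ stable on both sides. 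So the reconstruction is a two-sided inverse of $(\pi,g)$, establishing the pullback square together with the uniqueness of $\sigma$ and of the $b_i$.

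Finally, $\pi'$ (hence $\pi$) is a finite \'etale, i.e.\ finite-sheeted, covering map. Choosing base points identifies $\prod_i\Jac^{d_i}(X)$ and $\prod_i\Jac^{b_i}(X)$ with $\Jac(X)^{q+1}$, and in these coordinates $\pi'$ becomes the affine map $x\mapsto Mx+v$, with $v$ a fixed point (depending on $P$, $L$, and the base points) and $M\in\mathrm{Mat}_{q+1}(\ZZ)$ the exponent matrix of the $q+1$ relations above. In the basis given by the determinant relation and the $V_{2j}$-isomorphisms, $M$ has first row $\big(-(r_2+1),-(r_4+1),\dots,-(r_{2q}+1),-1\big)$ and, for $j=1,\dots,q$, a row with $-1$ in column $j$ and $+1$ in column $j+1$; eliminating along these bidiagonal rows (each contributing $\pm1$) collapses the first row to $(-r,0,\dots,0)$, since $\sum_{j=1}^{q}(r_{2j}+1)+1=\sum_{i=1}^{n}r_i=r$. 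So $|\det M|=r\neq 0$, the linear part of $\pi'$ is an isogeny of $\Jac(X)^{q+1}$, and a translate of an isogeny of abelian varieties is finite, flat, and (in characteristic zero) \'etale --- here of degree $r^{2g}$, with $g$ the genus of $X$. Since $\pi$ is the base change of $\pi'$ along $h$ in the pullback square and ``finite \'etale'' is stable under base change, $\pi$ is likewise a finite-sheeted covering map. I expect the main obstacle to be the middle of the argument: matching the $q+1$ reconstruction identities to the precise $\det/\otimes$ formulas displayed for $h$ and $\pi'$, checking the degree vector $b$ from both sides, and pinning the exact $\alpha$ (hence $\sigma$) attached to Hitchin stability so that the ``unique solution'' claim is genuine; granting those, the covering statement is immediate from $|\det M|=r$.
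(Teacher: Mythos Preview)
Your reconstruction of the inverse to $(\pi,g)$, your telescoping of the determinant relation, and your computation $|\det M|=r$ for the covering degree are all correct and, if anything, cleaner than the paper's versions of those steps; the paper verifies commutativity by direct expansion of $h\circ\pi$ and proves the $r^{2g}$-sheeted covering by successively expressing $\det U_{i+1}$ and $U_{i+2}$ in terms of $U_i$ and the $V_j$, which is exactly the row-reduction you perform on $M$.

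The genuine gap is the stability correspondence. You claim that Hitchin stability on $\calM_{X,L,P}(Q)$ corresponds under $\pi$ to $\sigma$-stability on $\calR^\sigma_X$ with $\sigma_i\equiv\frac{1}{n-1}\mu_{tot}$, because this is the $\sigma$ for which $\alpha=0$. But that substitution only says that $\alpha=0$ stability on a \emph{given} chain equals this $\sigma$-stability on the \emph{same} chain read as a tuple. The map $\pi$ does not preserve the chain: it replaces $(U_1,\ldots,U_n)$ by the tuple with $V_{2j-1}=U_{2j-1}^*U_{2j}L$ and $V_{2j}=U_{2j-1}^*U_{2j+1}L^2$, whose degrees $e_i$ differ from the $d_{i+1}$ by terms involving the $d_{2j-1}$ and $t$. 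A quick check already in the case $q=1$ shows that the plain slope inequality $d_3<\mu_{tot}$ for the subrepresentation $U_3$ is \emph{not} equivalent to the $\alpha=0$ inequality for the corresponding subtuple $(0,V_2)$; the two differ by terms in $d_1$ and $t$. So your proposed $\sigma$ is wrong in general, and the invertibility of $\alpha\leftrightarrow\sigma$ is a non sequitur for uniqueness.

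The paper handles this by testing, for each even $j$, the subrepresentation $(0,\ldots,U'''_j,\ldots,0)$ against \emph{two} different subtuples it generates in $(V_1,\ldots,V_{2q})$, obtaining the two families of linear equations~(\ref{sig1}) and~(\ref{sig2}) in $\sigma_1,\ldots,\sigma_{2q}$; it then shows the resulting $2q\times 2q$ system has nonzero determinant via the matrix determinant lemma (the value being $(r-q-\sum_{i\ \mathrm{even}} r_i)(-r)^{2q-1}/\prod r_{2i}$, nonzero exactly because the odd ranks contribute $q+1$). This is the ``linear system with a unique solution'' flagged in the introduction, and it is the part of the argument your proposal does not supply. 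Your own closing caveat correctly identifies this as the weak point; in particular, you should expect the correct $\sigma$ to depend on the individual $d_i$, $r_i$, and $t$, not merely on $\mu_{tot}$.
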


\begin{proof}
To show that this diagram commutes, we will consider$$h \circ \pi (U_1,\ldots,U_n;\phi_1,\ldots,\phi_{n-1}).$$Recalling $P=U_1\text{det}(U_2)U_3\text{det}(U_4)U_5\ldots U_{2q+1}$, the first term is
\begin{equation}\begin{split}
\bigotimes_{i=1, \text{odd}}^{2q-1}\text{det}(U_i^*U_{i+1}L) &= \bigotimes_{i=1, \text{odd}}^{2q-1}\left((U_i^*)^{r_{i+1}}\text{det}(U_{i+1})L^{r_{i+1}}\right)
\\
&=P(U^*_1)^{r_2+1}(U_3^*)^{r_4+1}\ldots (U_{2q-1}^*)^{r_{2q}+1}U_{2q+1}^*\nonumber
\end{split}\end{equation}
which is exactly the first term of $\pi'\circ g$.  The other terms are similar.  The unique $b_i$ extolled in the statement of the theorem are nothing but the degrees of these line bundles.

To see that  $\pi$ is a $r^{2g}$-fold covering map, write \begin{equation*}
(V_1,\ldots,V_{2q}) = (U_1^*U_2L,U_1^*U_3L^2,U_3^*U_4L,U_3^*U_5L^2,\ldots,U^*_{n-2}U_nL^2)
\end{equation*}
 and then note 
\begin{equation}
\text{det}(V_1) =(U_1^*)^{r_2}L^{r_2}\left(P^*U_1^*U_3^*\text{det}(U_4)^*U_5^*\ldots U_{2q+1}^*\right).\nonumber
\end{equation}
The reasons we can say that $\pi$ is a finite covering are the following: for $i$ odd, $\text{det}V_i = (U_i^*)^{r_{i+1}}\text{det}(U_{i+1})L^{r_{i+1}}$ and so $\text{det}(U_{i+1})^* =(U_i^*)^{r_{i+1}}L^{r_{i+1}}\text{det}(V_i)$. In addition, $\text{det}(V_{i+1}) = U_{i}^*U_{i+2}L^2$ and so $U_{i+2}^* = U_{i-1}^*U_{i}L^2\text{det}(V_{i+1})^*$.  In particular, this tells us that $\text{det}(U_{i+1})^*$ and $U_{i+2}^*$ can be written in terms of $U_i^*$ and some other known quantities.  By doing this for all odd $i$ from $3$ to $2q-1$, we can write $(U_1^*)^{1+r_2+1+\ldots + r_{2q}+1}$ in known terms.  Then, accounting for torsion in the Jacobian, $\pi$ is an $r^{2g}$-fold covering map.  A similar approach shows that $\pi'$ is a finite-to-one covering map.
\\
\\
\qquad Now it remains to show that there exist unique $(\sigma_1,\ldots,\sigma_{2q})$ for which the above holds. We begin by defining the line bundles $U'_j = \phi_{j-1}(U_{j-1})$ and $U''_j = \phi^{-1}_j(U_{j+1})$ for all $j$ even.  For any line subbundle $U'''_j$ of $U_j$ which is \emph{not} equal to either $U'_j$ or $U''_j$, we can define a subrepresentation $$(0,\ldots,U'''_j,\ldots,0;0,\ldots,0)$$ of $$(U_1,\ldots,U_n;\phi_1,\ldots,\phi_{n-1})\in\calM^\alpha_{X,L,P}(Q).$$   It is clear that stability implies $\text{deg}(U'''_j) < \frac{d}{r}$.  Now, such subrepresentations are in one-to-one correspondence with sub-$4q$-ruples $$(0,\ldots,U_{j-1}^*U'''_jL\ldots,0;0,\ldots,0)$$ of $$(V_1,\ldots,V_{2q};\phi_1,\ldots,\phi_{2q})\in\calR^{\sigma}_{X}(k_1,\dots,k_{2q};e_1,\dots,e_{2q}).$$  By definition, such a $4q$-tuple is stable if and only if 
\begin{equation}\begin{split}
e' = \text{deg}(U_{j-1}^*U'''_jL) &< \frac{e}{2q}\sum_{i=1}^{2q}\frac{k'_i}{k_i} +\sum_{i=1}^{2q}\sigma_i\left(k'-(k+1)\frac{k'_i}{k_i}\right)
\\
&=\frac{e}{2qk_{j-1}}+\sum_{i=1}^{2q}\sigma_i\left(1-(k+1)\frac{k'_i}{k_i}\right)
\\
&=\frac{e}{2qr_j}+ \sigma_{j-1}\left(1-\frac{r}{r_j}\right)+\sum_{i=1, i\neq j-1}^{2q}\sigma_i \nonumber
\end{split}\end{equation}
where 
\begin{equation}\begin{split}
e = \sum_{i=1}^{2q+1}d_i+\sum_{i=1, i \text{odd}}^{2q-1}\left((r_{i+1}+2)t-(r_{i+1}+1)d_i\right). \nonumber
\end{split}\end{equation}
Since we also know that 
\begin{equation}
\text{deg}(U_{j-1}^*U'''_jL) = -d_{j-1}+\text{deg}(U'''_j) +t < -d_{j-1}+\frac{d}{r}+t\nonumber
\end{equation}
we see that equivalence of stability in $\calM^\alpha_{X,L,P}(Q)$ and in $\calR^\sigma_{X}(k_1,\dots,k_{2q};e_1,\dots,e_{2q})$ boils down to the equation
\begin{equation}
-d_{j-1}+\frac{d}{r}+t = \frac{e}{2qr_j}+ \sigma_{j-1}\left(1-\frac{r}{r_j}\right)+\sum_{i=1, i\neq j-1}^{2q}\sigma_i \nonumber
\end{equation}
which allows us to deduce 
\begin{equation}\label{sig1}
 \sigma_{j-1}\left(1-\frac{r}{r_j}\right)+\sum_{i=1, i\neq j-1}^{2q}\sigma_i  =-d_{j-1}+\frac{d}{r}+t-\frac{e}{2qr_j}
\end{equation}
for all $j$ even.
\\
\\
\qquad Finally, considering the subrepresentation $(0,\ldots,U'''_j,\ldots,0;0,\ldots,0)$ again, we note that is also in correspondence with sub-$4q$-tuples $$(0,\ldots0,U^*_{j-1}U'''_jL,U_{j-1}^*U_{j+1}L^2,0\ldots,0;0,\ldots,0)$$ of $$ (V_1,\ldots,V_{2q};\phi_1,\ldots,\phi_{2q})\in\calR^{\sigma}_{X}(k_1,\dots,k_{2q};e_1,\dots,e_{2q}),$$ for which the stability condition is
\begin{equation}\begin{split}\nonumber
e' &< \frac{e}{2q}\sum_{i=1}^{2q}\frac{k'_i}{k_i} +\sum_{i=1}^{2q}\sigma_i\left(k'-(k+1)\frac{k'_i}{k_i}\right)
\\
&=\frac{e}{2q}\left(\frac{1}{k_{j-1}}+\frac{1}{k_j}\right)+\sum_{i=1}^{2q}\sigma_i\left(2-(k+1)\frac{k'_i}{k_i}\right)
\\
&=\frac{e}{2q}\left(\frac{1}{r_j}+\frac{1}{r_{j+1}}\right)+ \sigma_{j-1}\left(2-\frac{r}{r_j}\right) + \sigma_{j}\left(2-\frac{r}{r_{j+1}}\right)+\sum_{i=1, i\neq j-1,j}^{2q}2\sigma_i 
\end{split}\end{equation}
where $e$ is as above and $e'$ is
\begin{equation}\begin{split}
e' &=\text{deg}(U_{j-1}^*U'''_jL) + \text{deg}(U_{j-1}^*U_{j+1}L)
\\
&=-2d_{j-1} + d'''_j+d_{j+1}+3t
\\&<-2d_{j-1} + \frac{d}{r}+d_{j+1}+3t.\nonumber
\end{split}\end{equation}
Hence, we set
\begin{equation}
-2d_{j-1} + \frac{d}{r}+d_{j+1}+3t  = \frac{e}{2q}\left(\frac{1}{r_j}+\frac{1}{r_{j+1}}\right)+ \sigma_{j-1}\left(2-\frac{r}{r_j}\right) + \sigma_{j}\left(2-\frac{r}{r_{j+1}}\right)+\sum_{i=1, i\neq j-1,j}^{2q}2\sigma_i \nonumber
\end{equation}
from which we can calculate
\begin{equation}\begin{split}\label{sig2}
&\sigma_{j-1}\left(2-\frac{r}{r_j}\right) + \sigma_{j}\left(2-{r}\right)+\sum_{i=1, i\neq j-1,j}^{2q}2\sigma_i 
\\
&\qquad\qquad =-2d_{j-1} + \frac{d}{r}+d_{j+1}+3t - \frac{e}{2q}\left(\frac{1}{r_j}+\frac{1}{r_{j+1}}\right)
\end{split}\end{equation}
for all $j$ even.

Now we must only show that the system of equations defined by (\ref{sig1}) and (\ref{sig2}) has a unique solution.  The associated $2q\times2q$ matrix is
\begin{equation}
\Sigma_q =
\left(\begin{matrix}
1-\frac{r}{r_2} &1&1&1&\cdots&1 \\
1& 1&1-\frac{r}{r_4}&1&&\vdots\\
\vdots&&&&&\\
2-\frac{r}{r_2}&2-r&2&2&\cdots&\\
2&2&2-\frac{r}{r_4}&2-r&&\\
\vdots&&&&&\\
2&2&2&2&\cdots&2-r
\end{matrix}\right)\nonumber
\end{equation}
which can be transformed to 
\begin{equation}
\Sigma'_q =
\left(\begin{matrix}
1-\frac{r}{r_2}&1&1&\cdots&1\\
1&1-r&1&&\vdots\\
1&1&1-\frac{r}{r_4}&&\\
\vdots&&&\ddots&\\
1&1&1&\cdots&1-r
\end{matrix}\right)\nonumber
\end{equation}
via elementary row operations.  The determinant of $\Sigma'_q$ can be calculated via the matrix determinant lemma, which states that for an invertible $n\times n$ matrix $A$ and column vectors $u$ and $v$, $\text{det}(A+uv^T) = (1+v^TA^{-1}u)\text{det}(A)$. By factoring $\Sigma'_q$ as 
\begin{equation}
\Sigma'_q =
\left(\begin{matrix}
-\frac{r}{r_2}&0&0&\cdots&0\\
0&-r&0&&\vdots\\
0&0&-\frac{r}{r_4}&&\\
\vdots&&&\ddots&\\
0&0&0&\cdots&-r
\end{matrix}\right)
+
\left(\begin{matrix}
1\\
1\\
\vdots\\
1\\
\end{matrix}\right)
\left(\begin{matrix}
1&1&\cdots&1\\
\end{matrix}\right)\nonumber
\end{equation}
we have

\begin{equation}\begin{split}
 (1+v^TA^{-1}u) &= \left(1-\frac{r_2}{r}-\frac{1}{r}-\frac{r_4}{r}-\ldots-\frac{1}{r}\right)
\end{split}\nonumber\end{equation}
as well as

\begin{equation}\begin{split}
\det(A) &=\left(-\frac{r}{r_2}\right)\left(-r\right)\left(-\frac{r}{r_4}\right)\ldots\left(-r\right).\nonumber
\end{split}\end{equation}

From these we can then calculate
\begin{equation}\begin{split}
\text{det}(\Sigma'_q) 
&=\left(r-q-r_2-r_4-\ldots-r_{2q-1}\right)\frac{(-r)^{2q-1}}{r_2r_4\ldots r_{2q-1}}.\nonumber
\end{split}\end{equation}

Since $r=1+(q+r_2+r_4+\ldots+r_{2q-1})$, the determinant is always nonzero, and the proof is complete.
\end{proof}

One way in which to interpret Theorem \ref{ThmPullback} is that the map $h$ generalizes the determinant map of vector bundles to tuples: the determinant of a $4q$-tuple (which contains $2q$ bundles) is a tuple of $q+1$ determinants.  Hence, the fibres of $h$ are the generalization of moduli spaces of bundles of fixed determinant.\\

\noin\emph{Remarks.} The reason for restricting ourselves to argyle quivers in the theorem is that our analysis of $\sigma$ depends explicitly on the fact that every second bundle was a line bundle (in how we defined $U'''_j$).  We do not expect such a clean formulation of the pullback property in the non-argyle case. When the genus of $X$ is $0$, the image of $h$ is just a point, and so there is no useful fibration structure coming from $h$.  However, $\CM_{X,L,P}(Q)$ is still a finite-to-one cover of $\calR^\sigma_X(k_i;e_i)$.  When $g=1$, the Jacobians and the elliptic curve $X$ itself can be identified, and so $\CM_{X,L,P}(Q)$ fibres over a Cartesian product of the elliptic curve with itself some number of times.  In this case, one can view the pullback procedure as expressing the data of a representation of $Q$, which consists of bundles and twisted maps, in terms of simpler data on $X$, namely a tuple of points, after fixing the determinant of the representation (by picking a fibre of $h$) and up to some choice of roots of unity (the map $\pi$).  In some sense, this picture is reminiscent of the spectral viewpoint and the Hitchin fibration for Higgs bundles, which transforms the data of a Higgs bundle on a Riemann surface $X$ to a point on the Jacobian of another Riemann surface, the so-called ``spectral curve'' of the Higgs bundle \cite{NJH:87,BNR:89}.  In the pullback diagram for tuples, we see products of Jacobians rather than a single Jacobian.

We also stress the general utility of the pullback diagram.  A special case of Theorem \ref{ThmPullback} in \cite{MT:94} (cf. also \cite{PBG:94}) is used to obtain an exact geometric identification of the moduli space of stable pairs (a single bundle with a single map) by variation of stability, wherein the stability parameter is initialized at an extreme value and then the desired moduli space is constructed in steps by flips and flops as the parameter crosses certain walls.  In principle, the same procedure can be applied for tuples associated to the more general argyle quivers above but this would involve quite a number of birational transformations.

\section{Quiver bundles on $\PP^1$}\label{SectP1}

In this section, we seek explicit identifications of moduli spaces of twisted representations of argyle quivers when $X$ is $\PP^1$, the most concrete setting.  We begin with the case where the length of the quiver is $n=3$ and work from there.

\subsection{Type $(1,k,1)$ quivers}\label{Sect1k1}
We begin with the quiver$$Q =\bullet_{1,d_1}\longrightarrow \bullet_{k,d_2} \longrightarrow \bullet_{1,d_3}\nonumber$$and put $r=k+2$ and $d=d_1+d_2+d_3$.  A representation of $Q$ is a tuple of the form $(U_1,U_2,U_3;\phi_1,\phi_2)$ in which $U_1\cong\mathcal{O}(d_1)$ and $U_3\cong \calO(d_3)$ since $\mbox{Pic}(\PP^1)\cong\ZZ$. In addition, $U_2$ splits as \begin{equation}\label{u2splitting}\mathcal{O}(a_1)^{\plus s_1}\oplus\cdots\oplus\mathcal{O}(a_m)^{\plus s_m}\nonumber\end{equation} for some $a_i\in\mathbb{Z}$ and some $s_i>0$, where $\sum_{i=1}^m s_ia_i = d_2$ and $k=\sum s_i$. We always sort the $a_i$'s as $a_1 > a_2 > \ldots> a_m$.  With this information in hand, we can rewrite the representation
\begin{equation}
 \begin{tikzcd}
\calO(d_1)\arrow{r}{\Xi} &U_2 \arrow{r}{\Phi} 
&\mathcal{O}(d)
\end{tikzcd}\nonumber
\end{equation}
as

\begin{equation}\label{big1k1quiver}
\begin{tikzpicture}

    \node (a) at (0,0){$\mathcal{O}(a_1)$};
     \node[below=0cm of a] (w){$\oplus$};
    \node[below=0.2cm of a] (b){$\vdots$};
     \node[below=0cm of b] (y){$\oplus$};
 \node[below=0.4cm of b] (c){$\mathcal{O}(a_1)$};
   \node[below=0cm of c] (x){$\oplus$};
 \node[below=0.5cm of c] (d){$\mathcal{O}(a_2)$};
  \node[below=0cm of d] (s){$\oplus$};
 \node[below=0.2cm of d] (e){$\vdots$};
  \node[below=0cm of e] (q){$\oplus$};
 \node[below=0.4cm of e] (f){$\mathcal{O}(a_m)$};
 \node[right=4cm of x] (g){$\mathcal{O}(d_3)$};
  \node[left=3.8cm of x] (h){$\mathcal{O}(d_1)$};

 \draw[->] (a) -- (g) node[pos=0.35,above]{$\phi^1_1$};
 \draw[->] (c) -- (g) node[pos=0.35,above]{$\phi_1^{s_1}$};
 \draw[->] (d) -- (g) node[pos=0.35,above]{$\phi_2^{1}$};
 \draw[->] (f) -- (g) node[pos=0.4,above]{$\phi_m^{s_m}$};
 
  \draw[->] (h) -- (a) node[pos=0.65,above]{$\xi_1^1$};
 \draw[->] (h) -- (c) node[pos=0.65,above]{$\xi_1^{s_1}$};
 \draw[->] (h) -- (d) node[pos=0.65,above]{$\xi_2^{1}$};
 \draw[->] (h) -- (f) node[pos=0.65,above]{$\xi_m^{s_m}$};
    \end{tikzpicture}\nonumber
\end{equation}

In this diagram $\phi_i^j \in H^0 (\mathbb{P}^1,\mathcal{O}(d-a_i+t))$ and $\xi_i^j \in H^0(\PP^1,\calO(d_3-a_i+t))$.  This picture is acted upon by elements of $\text{Aut}(U_1)\times\text{Aut}(U_2)\times\text{Aut}(U_3)$.  From this group, there are degree $0$ maps between each pair of nodes of equal degree, as well as degree $a_i-a_j$ maps from $\calO(a_i)$ to $\calO(a_j)$ for all $i<j$. If we must be very specific, we write $\psi^{ij}_{kl}$ for the map from the $k$-th $a_i$ node to the $l$-th $a_j$ node. Most of the time when considering such maps, it is not important which of the $\calO(a_i)$ nodes we consider, so we simply write $\phi_i$ and $\psi_{ij}$.

Next we will consider which values of $d_1,a_1,\ldots,a_k,d_3$ are allowable under the standard slope-stability conditions.  Since we have already sorted the $a_i$ as $a_1> a_2>\ldots>a_m$, it suffices to impose the following:

\begin{equation}\label{conditions1}\begin{split}
d_3&<\mu_{\text{tot}}\\
\frac{d_3+a_1}{2} &< \mu_{\text{tot}}\\
\vdots\\
\frac{d_3+s_1a_1}{1+s_1} &<\mu_{tot}\\
\frac{d_3+s_1a_1+a_2}{2+s_1} &< \mu_{\text{tot}}\\
\vdots\\
\frac{d_3+\sum_{i=1}^{m} s_ia_i}{k+1} &< \mu_{\text{tot}}\nonumber
\end{split}\end{equation}

Recall that $\phi_i \in H^0(\mathbb{P}^1,\mathcal{O}(d_2-a_i+t))\setminus 0 \cong \mathbb{C}^{d_2-a_i+t+1}\setminus 0$.  Define $i'$ so that $a_{i'+1} < \mu_{\text{tot}} < a_{i'}$ (allowing the cases $i'=1$ or $i'=m$).  This will allow us to say something about the $\phi$ and $\xi$ maps.  We can see that for any $i > i'$, $\calO(a_i)$ can be allowed to be invariant without stability issues, meaning any of the $\phi_i^j$ can be allowed to be zero. On the other hand, for $i>i'$, none of $\xi_i^j$ cannot be allowed to be zero.  If one is zero, then the subbundle consisting of all the nodes except a single $\calO(a_i)$ would be invariant, but this is not stable since $a_{i} < \mu_{\text{tot}}$. In an similar way, for any $i\leq i'$,  $\phi_i^j$ cannot be zero, but $\xi_i^j$ can.  The final restriction to note is that while we allow any of $\xi^1_1,\ldots,\xi_{i'}^{s_{i'}},\phi_{i'+1}^1,\ldots,\phi_m^{s_m}$ to vanish, they cannot all be zero concurrently as that would imply that the representation could be presented as a direct sum of two stable representations.
 
 We will reduce the amount of freedom that some of the $\phi_i$ and $\xi_i$ have by letting them be acted upon by some of the $\psi_{pq}$.  In other words, we construct the moduli variety by performing \emph{reduction in stages}.    We are performing a geometric-invariant-theoretic (GIT) reduction using the $\Phi$-stability condition, but note that we are quotienting by a \emph{non-reductive} group.  In general, an element $\Psi\in\mbox{Aut}(U_i)$ is an invertible matrix-valued polynomial (in the affine parameter $z\in\PP^1$) whose degree $0$ piece is an element of $\mbox{GL}(r_i,\CC)$.  The diagonal terms in particular comprise the usual maximal torus in $\mbox{GL}(r_i,\CC)$.  The off-diagonal terms (which are all zero to one side of the diagonal, by degree considerations) measure the non-reductiveness of the group.  Fortunately, the action of the off-diagonal terms act on the polynomials $\phi_i$ in the representation in a predictable way: they reduce the degree of $\phi_i$ or $\xi_i$ in accordance with the Euclidean algorithm.

To be precise, consider $\psi_{ij}:\calO(a_i)\to\calO(a_j)$ where $a_i\neq a_j$ and $i,j\leq i'$. We send
$\phi_i \mapsto \phi_i + \phi_j\psi_{ij} := \phi_i'$. We know that $\psi_{ij}\in H^0(\mathbb{P}^1,\mathcal{O}(a_j-a_i)) \cong \mathbb{C}^{a_j-a_i+1}$, so one can see that we can use the $a_j-a_i+1$ degrees of freedom of $\psi_{ij}$ to kill off some of the freedom of $\phi_i$.  In particular, the dimension of the space that parametrizes $\phi_i'$ will be $d_2-a_i+t+1 - (a_j-a_i+1) = d_2-a_j+t$. To be more precise, if
\begin{equation}\begin{split}
\phi'_i&= \phi_i + \phi_j\psi_{ij}
\\
&=(A_pz^p+\ldots+A_0) + (B_qz^q+\ldots+B_0)(C_rz^r+\ldots+C_0)\nonumber
\end{split}\end{equation}
then we set $C_r = \frac{-A_p}{B_q}$ so that $C_rB_q = -A_p$, as well as $C_{r-1} = \frac{-1}{B_q}(A_{p-1} + C_rB_{q-1})$ so that $C_rB_{q-1}+C_{r-1}B_q = -A_{p-1}$, etc.  In general, we set
\begin{equation}
C_{r-i} = \frac{-1}{B_q}(A_{p-i} + \sum_{j=0}^{i-1}C_{r-j}B_{m-i+j})\nonumber
\end{equation}
for $i=1,\ldots,r$.  

An additional property of this action is that the size of the automorphism group is not constant; it changes in accordance with divisor equivalences.  This is best explained from the point of view of the spectral correspondence, in which we appeal to the identification of these quiver representations with twisted Higgs bundles.  As previously mentioned, the spectral correspondence \cite{NJH:87,BNR:89} is a bijection between Higgs bundles of fixed generic characteristic polynomial on a curve and line bundles supported on another curve.  This additional curve is called a \emph{spectral curve}, as its points are precisely the spectrum of the Higgs fields on one side of the correspondence.  The spectral curve, $\widetilde X$, is a finite-to-one cover of the original curve ($\PP^1$ in this case), branched over a finite number of points where the characteristic polynomial develops eigenvalues with multiplicity.  The spectral line bundles record the eigenspaces of the Higgs fields.

Most importantly, the spectral correspondence respects isomorphism classes: if two Higgs bundles $(E,\Phi)$ and $(E',\Phi')$ are isomorphic, then their spectral line bundles $L$ and $L'$ are isomorphic, and vice-versa.  If the genus of $\widetilde X$ is $g$, then the Jacobian of $\widetilde X$ is a $g$-dimensional complex torus modelled on the symmetric product $\mbox{S}^g(\widetilde X)$.  It fails to be globally isomorphic to $\mbox{S}^g(\widetilde X)$ because of special divisors.  Specifically, if the degree of the covering map is $r$, then we have an induced surjection $\mbox{S}^g(\widetilde X)\to\PP^r$.  Preimages of points in $\PP^r$ with a repeated coordinate induce extra automorphisms of the corresponding divisors in $\mbox{S}^g(\widetilde X)$.  The quotient of $\mbox{S}^g(\widetilde X)$ by these automorphisms results in $\mbox{Jac}(\widetilde X)$.  The classical example is the Jacobian of the genus $2$ hyperelliptic curve.  The covering map is a degree $2$ map $f:\widetilde X\to\PP^1$.  Its fibres form a $\PP^1$ of linearly-equivalent divisors.  The Jacobian is obtained by blowing down the ``canonical series'' (the preimage of this $\PP^1$ under $\mbox{S}^2(\widetilde X)\to\PP^2$) in $\mbox{S}^2(\widetilde X)$.  In higher genus and for higher degrees of the covering map, these equivalences are more numerous and complicated.

For us, these repeated coordinates in $\PP^r$ correspond to coincidences of invariant zeroes of polynomials in the Higgs fields determined by the representation of the quiver, meaning zeroes of $\phi_i$'s that are preserved by the action of automorphisms.  Suppose that we fix the splitting type $a=(a_1,\dots,a_m;s_1,\dots,s_m)$ of $U_2$ in our quiver.  This is tantamount to adding $2m$ labels to the central node that fix $U_2$.  The resulting moduli space, which we denote $\mathcal{M}_{\mathbb{P}^1,\mathcal{O}(t)}(Q,\mb a)$, keeps track of $\phi_i$ data without any contribution from vector bundle moduli.  We will excise any representations with collisions of invariant zeroes.  We denote the removal of the ``collision manifold'' by a superscript $\Delta$.

\begin{theorem}\label{Thm1k1} Let $Q$ be a quiver of type $(1,k,1)$ and let $\mathbf{a}$ be the splitting type of $U_2$.  The projective closure of $\CM_{\PP^1,\CO(t)}^\Delta(Q,\mathbf{a})$ is
 \begin{equation}\begin{split}
\overline{\mathcal{M}^\Delta_{\mathbb{P}^1,\mathcal{O}(t)}}(Q,\mathbf{a}) &\cong \mathbb{P}^q \times \prod_{j=1}^{i'}\emph{Gr}\Big(s_j,d_3-a_j+t+1-\sum_{k=1}^{j-1}s_{k}(a_k-a_{j}+1)\Big)
\\&\qquad\times \prod_{j=i'+1}^{m}\emph{Gr}\Big(s_j,a_j-d_1+t+1-\sum_{k=j}^{m-1}s_{k}(a_k-a_{j}+1)\Big)\nonumber
\end{split}\end{equation}
where
\begin{equation*}\begin{split}
q=\sum_{j=1}^{i'}s_j(d_3-a_j+t+1) + \sum_{j=i'+1}^{m}s_j(a_j-d_1+t+1)-1
-\sum_{j=1}^{i'}\sum_{k=i'+1}^m s_js_k(a_j-a_k+1).
\end{split}\end{equation*}\end{theorem}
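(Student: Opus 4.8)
The plan is to carry out the non-reductive GIT reduction in stages, tracking carefully how the automorphism group acts on the space of maps, and then assemble the resulting quotient as a product of a projective space and Grassmannians.

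First I would fix the splitting type $\mathbf{a}=(a_1,\dots,a_m;s_1,\dots,s_m)$ of $U_2$ and describe the full space of representations before quotienting: the $\phi_i^j$ live in $H^0(\PP^1,\CO(d_2-a_i+t))$ (excising the zero maps that stability forbids) and the $\xi_i^j$ live in $H^0(\PP^1,\CO(a_i-d_1+t))$, with the same stability-dictated nonvanishing conditions, and with the global condition that the collection of ``free'' maps $\xi_1^1,\dots,\xi_{i'}^{s_{i'}},\phi_{i'+1}^1,\dots,\phi_m^{s_m}$ not all vanish simultaneously (this is what produces the projective factor). The key dichotomy is the index $i'$ defined by $a_{i'+1}<\mu_{\text{tot}}<a_{i'}$: for $j\le i'$ the maps $\phi_i^j$ are forced nonzero and the $\xi_i^j$ can vanish, while for $j>i'$ the roles are reversed. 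I would then use the torus part of $\text{Aut}(U_1)\times\text{Aut}(U_3)$ together with the $\CC^\times$'s rescaling each $\CO(a_i)$-summand, and the off-diagonal automorphisms $\psi_{ij}^{kl}:\CO(a_i)\to\CO(a_j)$ (which only exist when $a_i>a_j$), to reduce.

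The central computation is the Euclidean-reduction step already sketched before the theorem statement: an off-diagonal automorphism $\psi_{ij}$ with $a_i>a_j$ acts by $\phi_i\mapsto\phi_i+\phi_j\psi_{ij}$, and because $\phi_j$ is nonzero with known leading term one can solve uniquely (via the displayed recursion for the $C_{r-i}$) to kill the top $a_i-a_j+1$ coefficients of $\phi_i$. So after reduction the $s_j$ maps $\phi_j^1,\dots,\phi_j^{s_j}$ of a given slope $a_j$ (for $j>i'$, using the $\xi$'s; for $j\le i'$, using the $\phi$'s in the other direction) are reduced modulo the span of lower-slope rows, and the resulting parameter space of an $s_j$-tuple of reduced sections up to the $\mathrm{GL}(s_j,\CC)$ from $\text{Aut}$ acting on that block is exactly a Grassmannian $\mathrm{Gr}(s_j,N_j)$ with $N_j = d_3-a_j+t+1-\sum_{k<j}s_k(a_k-a_j+1)$ (resp. $N_j=a_j-d_1+t+1-\sum_{k>j}^{m-1}s_k(a_k-a_j+1)$ in the $\phi$-dominated range). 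Doing this for all $j\ne$ the ``boundary'' block, and collecting the leftover degrees of freedom from the $i'$-st and $(i'+1)$-st blocks (the ones that mix across $\mu_{\text{tot}}$) gives — after the global nonvanishing condition and the overall $\CC^\times$ — a single projective space $\PP^q$, where $q$ is obtained by adding up the $\sum_j s_j(d_3-a_j+t+1)+\sum_j s_j(a_j-d_1+t+1)$ raw dimensions, subtracting $1$ for projectivization, and subtracting the cross-terms $\sum_{j\le i'}\sum_{k>i'}s_js_k(a_j-a_k+1)$ coming from reductions between the two halves. Passing to the projective closure $\overline{\CM^\Delta}$ is what lets us include the loci where individual reduced sections degenerate without collisions, making the Grassmannian factors (rather than open strata in them) appear.

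The main obstacle I anticipate is bookkeeping the order of operations in the staged reduction so that no freedom is double-counted and the group quotients genuinely decouple into independent Grassmannian and projective factors: one must verify that reducing $\phi_j$ modulo $\phi_k$ for $k<j$ leaves enough residual automorphism to still act as the full $\mathrm{GL}(s_j,\CC)$ on the $j$-th block, and that the stabilizer structure is compatible with forming the product. Concretely this means checking that the upper-triangular (in slope) structure of $\text{Aut}(U_2)$ lets one quotient block by block from the top slope downward, that the collision-free condition $\Delta$ is exactly what guarantees the leading coefficients $B_q$ used in the recursion are invertible throughout, and that the dimension count in the definition of $q$ matches $\dim\PP^q$ on the nose. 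Once the decoupling is established, identifying each factor is routine: an $s_j$-tuple of vectors in $\CC^{N_j}$ modulo $\mathrm{GL}(s_j,\CC)$, with the stability-induced genericity, is the Grassmannian $\mathrm{Gr}(s_j,N_j)$, and the residual scalar action on the remaining coordinates yields $\PP^q$.
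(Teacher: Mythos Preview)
Your proposal follows essentially the same route as the paper's proof: Euclidean reduction by the off-diagonal automorphisms $\psi_{ij}$, then identifying the blocks of forced-nonzero maps $\phi_j$ (for $j\le i'$) and $\xi_j$ (for $j>i'$) with Grassmannians, and the residual tuple $(\xi_1^1,\dots,\xi_{i'}^{s_{i'}},\phi_{i'+1}^1,\dots,\phi_m^{s_m})$ with a projective space after the cross-term reductions.

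The one place where your sketch is thinner than the paper is the step you call ``routine'': that an $s_j$-tuple of reduced sections modulo $\mathrm{GL}(s_j,\CC)$ is actually $\mathrm{Gr}(s_j,N_j)$, i.e.\ that the induced map on global sections $\tilde\Phi_j:\CC^{s_j}\to\CC^{N_j}$ is injective. You attribute this to ``stability-induced genericity'' without saying what that means; the paper gives the explicit argument. If $\tilde\Phi_j$ had nontrivial kernel $A$, then $A$ would be generated by sections of some subbundle $B\subset\CO^{\oplus s_j}$, and any such $B$ with sections must contain a sub-line-bundle of nonnegative degree, which is $\Phi$-invariant and destabilizing. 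This is the only point where stability is actually invoked in the identification, so it is worth making explicit rather than folding it into a genericity slogan. Also, a minor wording issue: the $\PP^q$ factor is fed by \emph{all} the free maps across the whole range (as you correctly list in your second paragraph), not just the $i'$-th and $(i'+1)$-st blocks as your third paragraph suggests; the cross-term subtraction $\sum_{j\le i'}\sum_{k>i'}s_js_k(a_j-a_k+1)$ comes from the automorphisms $\psi_{jk}$ that straddle $i'$.
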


\begin{proof}  We can act on all $\phi_i$ for $i\leq i'$ by all maps $\psi_{ij}$ that go from $\calO(a_i)$ to nodes of higher degree by the Euclidean algorithm, and similarly on all $\xi_i$ for $i'<i$  by maps $\psi_{ij}$ that go to $\calO(a_j)$ from nodes of lower degree.  It is important to note that if the power of $\psi_{ij}$ would reduce the amount of freedom of one of these maps (which are not allowed to be zero by stability) to zero, then the representation is not stable.  Lastly, $\psi_{ij}$ for $j\leq i'<i$ each reduce the freedom of one of  $\xi^1_1,\ldots,\xi_{i'}^{s_{i'}},\phi_{i'+1}^1,\ldots,\phi_m^{s_m}$.  We know that not all these can simultaneously vanish, so they contribute a single projective space to the moduli variety.

Now, after ``using up'' the power of the $\psi_{ij}$ between nodes of different degree and accounting for the data contributed by  $\xi^1_1,\ldots,\xi_{i'}^{s_{i'}},\phi_{i'+1}^1,\ldots,\phi_m^{s_m}$, we can split up and rewrite the remaining information as 
\begin{equation}
\begin{tikzpicture}

 \node (a) at (0,0){$\mathcal{O}$};
    \node[below=0cm of a] (b){$\oplus$};
 \node[below=0.3cm of a] (c){$\vdots$};
 \node[below=0cm of c] (d){$\oplus$};
 \node[below=0cm of d] (e){$\mathcal{O}$};
  \node (f)[right=1.2cm of c]{$\mathcal{O}(d_3-a_1)$};

 \draw[->] (a) -- (f) node[pos=0.35,above]{${\phi_1^1}$};
 \draw[->] (e) -- (f) node[pos=0.35,above]{${\phi^{s_1}_1}$};

\node[right = 0.1 cm of f](g){$\;\;\;\;\cdots$};

\node [right = 4.8cm of a] (h){$\mathcal{O}$};
    \node[below=0cm of h] (i){$\oplus$};
 \node[below=0.3cm of h] (j){$\vdots$};
 \node[below=0cm of j] (k){$\oplus$};
 \node[below=0cm of k] (l){$\mathcal{O}$};
  \node (m)[right=1.2cm of j]{$\mathcal{O}(d_3-a_{i'})$};

 \draw[->] (h) -- (m) node[pos=0.35,above]{${\phi^1_{i'}}$};
 \draw[->] (l) -- (m) node[pos=0.35,above]{${\phi^{s_{i'}}_{i'}}$};

    \end{tikzpicture}\nonumber
    \end{equation}
    
    and 
    \begin{equation}
\begin{tikzpicture}

 \node (a) at (0,0){$\mathcal{O}$};
    \node[below=0cm of a] (b){$\oplus$};
 \node[below=0.3cm of a] (c){$\vdots$};
 \node[below=0cm of c] (d){$\oplus$};
 \node[below=0cm of d] (e){$\mathcal{O}$};
  \node (f)[left=1.2cm of c]{$\mathcal{O}(d_1-a_1)$};

 \draw[->] (f) -- (a) node[pos=0.35,above]{${\xi_{i'+1}^1}$};
 \draw[->] (f) -- (e) node[pos=0.5,above]{${\xi^{s_{i'+1}}_{i'+1}}$};

\node[right = 0.1 cm of c](g){$\;\;\;\;\cdots$};

\node [right = 4.8cm of a] (h){$\mathcal{O}$};
    \node[below=0cm of h] (i){$\oplus$};
 \node[below=0.3cm of h] (j){$\vdots$};
 \node[below=0cm of j] (k){$\oplus$};
 \node[below=0cm of k] (l){$\mathcal{O}$};
  \node (m)[left=1.2cm of j]{$\mathcal{O}(d_1-a_m)$};

 \draw[->] (m) -- (h) node[pos=0.35,above]{${\xi^1_m}$};
 \draw[->] (m) -- (l) node[pos=0.5,above]{${\xi^{s_m}_m}$};

    \end{tikzpicture}\nonumber
    \end{equation}
    
    Write $\Phi_i := {\phi_i^1}'\oplus\ldots\oplus{\phi_i^{s_i}}'$. We claim that the induced map of sections for each of these is, in fact, injective.  If $\tilde\Phi_1:\mathbb{C}^{s_1}\to \mathbb{C}^{d_3-a_1}$ is not injective, then there exists some nontrivial kernel $A$ which is generated by some subbundle $B$ of $\mathcal{O}\oplus\ldots\oplus\mathcal{O}$.  We can say $\text{rank}B < s$ and also note that $B$ must have sections since $A$ is nontrivial. If $\text{rank}B = 1$,  the only degree of $B$ that allows $B$ to have sections is zero, in which case $B$ is destabilizing.  If $\text{rank}B \geq 2$, it is possible that $\text{deg}B \leq -1$  and $B$ can have sections and may not be destabilizing.  However, $B$ must have some subbundle with non-negative degree, which would be destabilizing.  Thus, $\tilde\Phi_1$ is injective, and contributes $\text{Gr}(s_i,d_3-a_1+t+1)$ to the moduli space.  The same argument holds for any $\tilde\Phi_i:\mathbb{C}^{s_i}\to \mathbb{C}^{d_3-a_i+t-\sum_{j=1}^{i-1}s_j(a_j-a_i+1)}$ once noting that the reductions done above can be done in such a way that each $\phi'_i$ induces a map from $\mathbb{C}$ into the subspace $\mathbb{C}^{d_3-a_i+t-\sum_{j=1}^{i-1}s_j(a_j-a_i+1)}$ of $\mathbb{C}^{d_3-a_i+t+1}$, which corresponds to the space of degree $d_3-a+t-2s$ polynomials.  That is, each of the reduced $\phi'_i$ maps into the `same' $\mathbb{C}^{d_3-a_i+t-\sum_{j=1}^{i-1}s_j(a_j-a_i+1)}$.  Moreover, the equality of the moduli spaces of a quiver and its dual allows us to state a similar result for $\Xi_i = {\xi_i^1}'\oplus\ldots\oplus{\xi_i^{s_i}}'$. In particular, it contributes  $\text{Gr}\Big(s_j,a_j-d_1+t+1-\sum_{k=j}^{m-1}s_{k}(a_k-a_{j}+1)\Big)$ to the moduli space. \end{proof}

In the sequel, we reintegrate the collision manifold by identifying it with a twisted $(1,k,1)$ quiver variety for a different splitting type, leading to a stratification of $\mathcal{M}_{\mathbb{P}^1,\mathcal{O}(t)}(Q,\mathbf{a})$ by the algebraic type of $U_2$.

\subsection{General argyle quivers} 
The structure of an argyle quiver allows us to calculate the moduli space as a product of appropriately adjusted $(1,k,1)$ quiver varieties.

\begin{theorem}\label{ThmArgyle} Given a general argyle quiver $Q$ with $\mathbf{a}_i$ the splitting type of $U_i$, the projective completion of the regular part of the moduli space of representations of $Q$ in the category of $\calO(t)$-twisted holomorphic vector bundles over $\PP^1$ is
 \begin{equation}\begin{split}
 \overline{\calM^\Delta_{\PP^1,\calO(t)}}(Q,\mathbf{a}_2,\mathbf{a}_4,\dots,\mathbf{a}_{n-1}) 
&=  \overline{{\calM'}^\Delta_{\PP^1,\calO(t)}}(\bullet_{1,d_1} \longrightarrow \bullet_{r_2,d_2}\longrightarrow\bullet_{1,d_3},\mathbf{a}_2)\times\dots
 \\
 &\qquad\dots\times\overline{{\calM'}^\Delta_{\PP^1,\calO(t)}}(\bullet_{1,d_{n-2}} \longrightarrow \bullet_{r_{n-1},d_{n-1}}\longrightarrow\bullet_{1,d_n},\mathbf{a_{n-1}})
 \nonumber
 \end{split}\end{equation}
 where $$\overline{{\calM'}^\Delta_{\PP^1,\calO(t)}}( \bullet_{1,d_i} \longrightarrow \bullet_{r_{i+1},d_{i+1}}\longrightarrow\bullet_{1,d_{i+2}},\mathbf{a}_{i+1})$$ is the projective completion of the moduli space of the quiver  $$\bullet_{1,d_i} \longrightarrow \bullet_{r_{i+1},d_{i+1}}\longrightarrow\bullet_{1,d_{i+2}}$$ with splitting type of $U_i$ given by $\mathbf{a}_i$, with stability condition induced by $Q$.
\end{theorem}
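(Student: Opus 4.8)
The plan is to decompose an argyle quiver $Q$ of length $n = 2q+1$ along its odd (line-bundle) nodes into $q$ overlapping sub-quivers of type $(1,k,1)$, namely the sub-quivers $\bullet_{1,d_{2j-1}}\to\bullet_{r_{2j},d_{2j}}\to\bullet_{1,d_{2j+1}}$ for $j = 1,\dots,q$, and then argue that the data of a stable representation of $Q$ is, up to the automorphism action, the same as a tuple of data for each piece, with no cross-constraints. First I would observe that fixing the splitting types $\mathbf{a}_2,\mathbf{a}_4,\dots,\mathbf{a}_{n-1}$ of the even-indexed bundles fixes every bundle in the representation, since the odd-indexed $U_i$ are forced to be $\calO(d_i)$ on $\PP^1$; thus the moduli data is entirely carried by the maps $\phi_1,\dots,\phi_{n-1}$. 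The maps $\phi_{2j-1}$ and $\phi_{2j}$ attached to the $j$-th piece involve only $U_{2j-1}$, $U_{2j}$, and $U_{2j+1}$, so as sets the total collection of maps is literally a product $\prod_{j=1}^q \{(\phi_{2j-1},\phi_{2j})\}$.

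The substance is then twofold: (i) the automorphism group $\prod_i \mathrm{Aut}(U_i)$ acts on this product in a way compatible with the product structure, and (ii) the stability condition for $Q$, when restricted to the subrepresentations that are relevant to a single $(1,k,1)$ piece, is exactly the $(1,k,1)$ stability condition with some induced parameter. For (i), the key point is that $\mathrm{Aut}(\calO(d_i)) = \CC^\times$ for each odd node, a torus, which acts diagonally and independently on the two pieces sharing that node (it rescales the source of $\phi_{2j}$-type maps entering $U_{2j}$ from $U_{2j-1}$ in the $j$-th piece and, for the node $U_{2j+1}$, the target of $\phi$-maps in the $j$-th piece and the source of $\xi$-maps in the $(j+1)$-st piece); while $\mathrm{Aut}(U_{2j})$ touches only the $j$-th piece. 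Hence the GIT-type quotient, performed by reduction in stages exactly as in Theorem~\ref{Thm1k1}, factors through the product. For (ii), I would appeal to Theorem~\ref{ThmPullback}: the invariant subrepresentations of $Q$ that test stability and are supported on a single $(1,k,1)$ window are precisely the line sub-bundles $U'''_{2j}\subseteq U_{2j}$ (together with their saturations), and the stability inequality they impose, after substituting the $b_i$ and $\sigma$ from that theorem, is the $(1,k,1)$ inequality recorded in the conditions~(\ref{conditions1}) with $\mu_{\mathrm{tot}}$ replaced by a shifted slope. I would phrase this as: the $\sigma$-stability parameter for the full chain, restricted to the $j$-th coordinate block, descends to a single stability condition on the $j$-th $(1,k,1)$ quiver — this is the meaning of the decorated notation $\overline{{\calM'}^\Delta}$, with stability ``induced by $Q$''.

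The main obstacle I anticipate is showing that there are \emph{no} destabilizing subrepresentations of $Q$ that genuinely mix more than one $(1,k,1)$ window — i.e. that one does not need subchains spanning, say, nodes $U_{2j}$ through $U_{2j+2}$ to detect instability beyond what the single-window tests already catch. This is where the argyle hypothesis is essential: because every even-or-odd alternation forces a line bundle at each odd node, any invariant subchain is determined by its line-bundle pieces at the odd nodes and by line sub-bundles at the even nodes, and one can prove (by a saturation/slope argument, exactly of the type used in the proof of Theorem~\ref{ThmPullback} when bounding $\deg(U'''_j)$) that the slope inequality for a multi-window subchain is a positive combination of the single-window inequalities, hence implied by them. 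Once this ``locality of stability'' is established, the identification of $\overline{\calM^\Delta_{\PP^1,\calO(t)}}(Q,\mathbf{a}_2,\dots,\mathbf{a}_{n-1})$ with the stated product follows by assembling the $q$ copies of Theorem~\ref{Thm1k1}, noting that the collision loci (the $\Delta$) are removed window-by-window and their product is the collision locus of $Q$, and that projective closure commutes with finite products.
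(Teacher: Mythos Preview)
Your proposal is correct and follows the same decomposition into overlapping $(1,k,1)$ ``diamonds'' that the paper uses, but it is considerably more elaborate than the paper's own argument and diverges in how it handles stability. The paper does not invoke Theorem~\ref{ThmPullback} or the $\sigma$-parameters at all; it simply draws the representation, observes directly that the vanishing/non-vanishing constraints on the maps entering and leaving one rank-$k$ node (your $\zeta_j$'s) have no bearing on the stability behaviour of the maps attached to a different rank-$k$ node (your $\phi_i$'s), and concludes that the moduli problem factors. The only adjustment it records is the one you also note: in each $(1,k,1)$ block the cutoff index $i'$ is determined by the slope $\mu_{\mathrm{tot}}$ of the full quiver $Q$, not of the block alone.

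Your ``main obstacle'' --- ruling out destabilizing subchains that span several windows --- is a point the paper treats as self-evident from the picture rather than proving via a positive-combination-of-inequalities argument. Your route through Theorem~\ref{ThmPullback} would work and is more rigorous, but it is machinery the paper deliberately avoids here; what the paper's approach buys is brevity, while yours buys a cleaner justification of why stability really does localize to the diamonds.
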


\begin{proof}
Given a general argyle quiver 
\beqn
Q= \bullet_{1,d_1} \longrightarrow \bullet_{r_2,d_2}\longrightarrow\bullet_{1,d_3}\longrightarrow\cdots\longrightarrow\bullet_{r_{n-1},d_{n-1}}\longrightarrow \bullet_{1,d_n}\nonumber
\eeqn
we can write a representation $(U_1,\dots,U_n;\phi_1,\dots,\phi_{n-1})$ as
\beqn
\begin{tikzpicture}

    \node (a) at (0,0){$\mathcal{O}(a_1)$};
    \node[below=0cm of a] (b){$\oplus$};
 \node[below=0.3cm of a] (c){$\vdots$};
 \node[below=0cm of c] (d){$\oplus$};
 \node[below=0cm of d] (e){$\mathcal{O}(a_{m_a})$};
 
  \node (f)[right=1.2cm of c]{$\mathcal{O}(d_{i+1})$};

 \node (g)[right=3.1cm of a]{$\mathcal{O}(b_1)$};
    \node[below=0cm of g] (h){$\oplus$};
 \node[below=0.3cm of g] (i){$\vdots$};
 \node[below=0cm of i] (l){$\oplus$};
 \node[below=0cm of l] (m){$\mathcal{O}(b_{m_b})$};
 
 \node (n)[right=1.35cm of i]{$\mathcal{O}(d_{i+3})$};

  \node (o)[right=7.5cm of a]{$\mathcal{O}(c_1)$};
    \node[below=0cm of o] (p){$\oplus$};
 \node[below=0.3cm of o] (q){$\vdots$};
 \node[below=0cm of q] (r){$\oplus$};
 \node[below=0cm of r] (s){$\mathcal{O}(c_{m_c})$};

\node[right = 1cm of q]{$\cdots$};
\node[left = 1cm of c]{$\cdots$};

 \draw[->] (a) -- (f) node[pos=0.4,above]{$\zeta_1$};
  \draw[->] (e) -- (f) node[pos=0.4,above]{$\zeta_{m_a}$};
  
   \draw[->] (f) -- (g) node[pos=0.4,above]{$\phi_1$};
  \draw[->] (f) -- (m) node[pos=0.6,above]{$\phi_{m_b}$};
  
     \draw[->] (g) -- (n) node[pos=0.4,above]{$\xi_1$};
  \draw[->] (m) -- (n) node[pos=0.4,above]{$\xi_{m_{b}}$};
  
     \draw[->] (n) -- (o) node[pos=0.4,above]{};
  \draw[->] (n) -- (s) node[pos=0.6,above]{};

    \end{tikzpicture}\nonumber
    \eeqn
    
The conditions on the degrees of the nodes that allow stability are akin to those shown for the $(1,k,1)$ case, although there are many more.  From this picture, it is clear that whether some $\zeta_j$ is allowed to be zero or not, they do not effect the behaviour of the $\phi_i$ in terms of stability, and vice versa.  The same is not true of $\phi_i$ and $\xi_i$, as we have seen.  This suggests that we could consider the moduli space of $Q$ as decomposing as the moduli of the ``diamonds''.  Since the bundles associated to nodes labelled with rank $1$ are fixed, this does not account for any information more than once.  Thus, to calculate $\overline{\mathcal{M}^\Delta_{\PP^1,\calO(t)}}(Q,\mathbf{a}_2,\mathbf{a}_4,\dots,\mathbf{a}_{n-1})$, we only need to calculate  $\overline{\mathcal{M}^\Delta_{\PP^1,\calO(t)}}( \bullet_{1,d_i} \longrightarrow \bullet_{r_{i+1},d_{i+1}}\longrightarrow\bullet_{1,d_{i+2}},\mathbf{a}_{i+1})$ for each of the $(1,k,1)$ blocks, with the following difference: $i'$ is defined so that $a_{i'+1} < \mu_{\text{tot}} < a_{i'}$, where $\mu_{tot}$ is the slope of $Q$, not only the slope of the particular $(1,k,1)$ block.

\end{proof}

\subsection{Stratification of the moduli space by collisions}

In the preceding section, we computed the closure of a single stratum of the $(1,k,1)$ moduli space corresponding to fixing the holomorphic type of the rank $k$ piece and removing collision data.  In this section, we explore examples of how to glue the strata in some low $r$ and low $t$ cases, by realizing one stratum as the ``collision submanifold'' of a more generic stratum. In a sense, we take a finer look at the invariant theory of the representations by indentifying explicit invariants of the isomorphism class that coordinatize the strata.  These invariants take the form of zeroes of certain $\phi_i$'s, regarded as polynomials over $\PP^1$.\\

In the type-change stratification, the largest-dimensional stratum corresponds to representations of generic type, where ``generic'' means precisely the following:

\begin{definition}
Given a bundle $U$ on $\mathbb{P}^1$ of rank $r$ and degree $d$, its \emph{generic splitting} is the decomposition of $U$ as
\begin{equation}
\underbrace{\mathcal{O}(a+1) \oplus \mathcal{O}(a+1)\oplus\ldots\oplus\mathcal{O}(a+1)}_{s} \oplus \underbrace{\mathcal{O}(a)\oplus\ldots\oplus\mathcal{O}(a)}_{r-s}\nonumber
\end{equation}
such that $s(a+1)+(r-s)a=d$.
\end{definition}

The bundle $U$ admits other infinitely many ``less generic'' splitting types that are related to the one above by adding $1$ to the degree of a summand and simultaneously removing $1$ from the degree of another summand.  As per usual, it is stability that caps the number of splitting types that appear in the moduli space.

Consider the general $(1,k,1)$ case. For a representation with $U_2$ of type $$(a_1,\dots,a_1;\dots;a_m,\dots,a_m),$$ we have
\begin{equation}
\Phi=
\left(\begin{matrix}
0&0& \cdots &&&&\cdots& 0  \\
 \xi^1_1&0&  &&&&&\vdots \\
 \vdots &\vdots&\ddots&&&&&\\
 \xi^{s_1}_1&&&&&&&\\
 \xi^1_2&&&&&&&\\
  \vdots &&&&&&&\\
  \xi_m^{s_m}&0&&&&&&\vdots\\
0&\phi^1_1&\cdots&\phi_1^{s_1}& \phi^1_2&\cdots&\phi_m^{s_m}& 0\\\nonumber
\end{matrix}\right).
\end{equation}
By observing $\Psi^{-1}\Phi\Psi$, we see that $\phi_i^j$ will have an invariant zero if and only if $\phi_1^1,\dots,\phi_1^{s_1},\dots,\phi_i^1,\dots,\phi_i^{s_i}$ have a common zero.  As well, $\xi^j_i$ will have an invariant zero if and only if $\xi_j^1,\dots,\xi_j^{s_j},\dots,\xi_m^1,\dots,\xi_m^{s_m}$ have a common zero.  For $1\leq i \leq j\leq k$, we would like to construct a way to map a representation with $U_2$ of type $(a_1,\dots,a_1;\dots;a_m,\dots,a_m)$ to a representation with of the same type except that a term $a_i$ has been replaced with $b_i+1$ and a $a_j$ has been replaced with $a_j-1$.  In view of the above description of the invariant zeroes, it is possible to construct a meromorphic automorphism $\Theta$ that will make the above transformation exactly in the case that $\phi_1^1,\ldots,\phi_i^{s_i},\xi_{j}^1,\ldots\xi_m^{s_m}$ all share a zero, which is precisely when the automorphism has determinant equal to $1$ (as opposed to having a determinant which is a meromorphic section of $\CO$).

We pose the following algorithm that controls how the holomorphic type of $U_2$ changes due to a collision of invariant zeroes:

\textbf{The Type-Change Algorithm:}
Begin with an empty set $\calS$. Given a splitting $S_0 =(a_1,\dots,a_1;\dots;a_m,\dots,a_m)$ of $U_2$ (where $a_i$ appears $s_i$ times), add $S_0$ to $\calS$ then apply the following:

Given $S_p =(b_1,\dots,b_1;\dots;b_m,\dots,b_m)$, choose integers $i,j$ such that $1\leq i \leq j\leq k$ then construct the sequence $S_{p+1}$, which is identical to $S_p$, save for that a term $b_i$ has been replaced with $b_i+1$ and a $b_j$ has been replaced with $b_j-1$.  If this $S_{p+1}$ is not in $\calS$ and the corresponding representation type is stable, then we glue in the moduli space of representations corresponding to type $S_{p+1}$ in place of the collision locus of type $S_p$. Then, add $S_{p+1}$ to $\calS$ and restart this procedure with $S_{p+1}$.  If $S_{p+1}$ is unstable, then add it to $\calS$, and if $p>0$, apply the procedure to $S_{p-1}$.  If $p=0$, terminate.

The moduli space of representations of the quiver$$Q = \bullet_{1,d_1} \longrightarrow \bullet_{r_2,d_2}\longrightarrow\bullet_{1,d_3}$$in the category of $\calO(t)$-twisted holomorphic vector bundles over $\PP^1$ can then be geometrically realized as the moduli space corresponding to the generic splitting, subject to the type-change algorithm.

As an example, consider the quiver $Q=\bullet_{1,2}\longrightarrow\bullet_{2,-1}\longrightarrow\bullet_{1,-2}$ with $\mb a = (0;-1)$ and $t=5$.  A representation of $Q$ looks like
\begin{equation}
\begin{tikzpicture}

    \node (a) at (0,0){$\mathcal{O}$};
     \node[below=0.4cm of a] (w){$\oplus$};
    \node[below=1.3cm of a] (b){$\calO(-1)$};
 \node[right=1.8cm of w] (g){$\mathcal{O}(-2)$};
  \node[left=1.8cm of w] (h){$\mathcal{O}(2)$};

 \draw[->] (a) -- (g) node[pos=0.5,above]{$\phi_1$};
  \draw[->] (b) -- (g) node[pos=0.35,above]{$\phi_2$};
  \draw[->] (h) -- (a) node[pos=0.5,above]{$\xi_1$};
  \draw[->] (h) -- (b) node[pos=0.65,above]{$\xi_2$};
    \end{tikzpicture}\nonumber
\end{equation}
Here, $\xi_1\in H^0(\PP^1,\calO(3))$, $\xi_2\in H^0(\PP^1,\calO(2))$, $\phi_1\in H^0(\PP^1,\calO(3))$ and $\phi_2\in H^0(\PP^1,\calO(4))$.  By stability, $\xi_2$ and $\phi_1$ are not allowed to vanish, and they contribute $\PP^2$ and $\PP^3$ to the moduli space respectively.  Either of $\xi_1$ or $\phi_2$ can be zero, but they cannot vanish concurrently. The automorphism $\psi_{21}:\calO(-1)\to\calO$, $\psi_{21}\in H^0(\PP^1,\calO(1))$ acts on either of $\xi_1$ or $\phi_2$, reducing the amount of freedom by $2$, and so $(\xi_1,\phi_2)$ contributes $\PP^6$.  Hence
\begin{equation}
\overline{\mathcal{M}^\Delta_{\PP^1,\calO(5)}}( Q,\mathbf{a}) = \PP^2\times\PP^3\times\PP^6.\nonumber
\end{equation}
The only other splitting type of $U_2$ which corresponds to a stable representation of $Q$ is $\mathbf{b}=(1,-2)$.  Such a representation looks like
\begin{equation}
\begin{tikzpicture}

    \node (a) at (0,0){$\mathcal{O}(1)$};
     \node[below=0.4cm of a] (w){$\oplus$};
    \node[below=1.3cm of a] (b){$\calO(-2)$};
 \node[right=1.8cm of w] (g){$\mathcal{O}(-2)$};
  \node[left=1.8cm of w] (h){$\mathcal{O}(2)$};

 \draw[->] (a) -- (g) node[pos=0.5,above]{$\phi'_1$};
  \draw[->] (b) -- (g) node[pos=0.35,above]{$\phi'_2$};
  \draw[->] (h) -- (a) node[pos=0.5,above]{$\xi'_1$};
  \draw[->] (h) -- (b) node[pos=0.65,above]{$\xi'_2$};
    \end{tikzpicture}\nonumber
\end{equation}
In a way completely analagous to the above, we have
\begin{equation}
\overline{\mathcal{M}^\Delta_{\PP^1,\calO(5)}}( Q,\mathbf{b}) = \PP^1\times\PP^2\times\PP^6.\nonumber
\end{equation}
We can identify this space with one of the collision manifolds of $\mathcal{M}_{\PP^1,\calO(5)}(Q,\mathbf{a})$; in particular, when $\xi_2$ and $\phi_1$ share a zero $z'$, we can construct the following meromorphic automorphism
\begin{equation}
\Theta = 
 \left(\begin{matrix}
 1&0&0&0\\
0&\frac{1}{z-z'} &0&0 \\
0&0& z-z'&0\\
0&0&0&1\\
\end{matrix}\right)\nonumber
\end{equation}
that acts by conjugation to take a representation with $U_2$ of type $\mathbf{a}$ to a representation with $U_2$ of type $\mathbf{b}$. This amounts to a change of basis of the Higgs field.   Moreover, in this case we can make a fairly explicit identification of the full moduli space $\mathcal{M}_{\PP^1,\calO(5)}(Q)$: it is $ \PP^2\times\PP^3\times\PP^6$ blown down to $\PP^1\times\PP^2\times\PP^6$ along the collision locus of $\xi_2$ and $\phi_1$, which lies in $ \PP^2\times\PP^3$.

For a slightly trickier example, consider 
$Q=\bullet_{1,2}\longrightarrow\bullet_{2,0}\longrightarrow\bullet_{1,-3}$ with $\mb a = (0,0)$ and $t=6$.  A representation of $Q$ looks like
\begin{equation}
\begin{tikzpicture}

    \node (a) at (0,0){$\mathcal{O}$};
     \node[below=0.4cm of a] (w){$\oplus$};
    \node[below=1.3cm of a] (b){$\calO$};
 \node[right=1.8cm of w] (g){$\mathcal{O}(-3)$};
  \node[left=1.8cm of w] (h){$\mathcal{O}(2)$};

 \draw[->] (a) -- (g) node[pos=0.5,above]{$\phi_1$};
  \draw[->] (b) -- (g) node[pos=0.35,above]{$\phi_2$};
  \draw[->] (h) -- (a) node[pos=0.5,above]{$\xi_1$};
  \draw[->] (h) -- (b) node[pos=0.65,above]{$\xi_2$};
    \end{tikzpicture}\nonumber
\end{equation}
Here, $\xi_1,\xi_2\in H^0(\PP^1,\calO(4))$ and $\phi_1,\phi_2 \in H^0(\PP^1,\calO(3))$.  By stability, neither $\phi_1$ nor $\phi_2$ can be zero, and $\xi_1$ and $\xi_2$ cannot be zero concurrently.  Hence,
\begin{equation}
\overline{\mathcal{M}^\Delta_{\PP^1,\calO(6)}}( Q,\mathbf{a}) = \PP^9\times\text{Gr}(2,4).\nonumber
\end{equation}

Once again there is only one other splitting type of $U_2$ which corresponds to a stable representation, and in this case it is $\mathbf{b} = (1;-1)$.  Such a representation has moduli space calculated in the same way as the first example:
\begin{equation}
\overline{\mathcal{M}^\Delta_{\PP^1,\calO(6)}}( Q,\mathbf{b}) = \PP^2\times\PP^3\times\PP^9.\nonumber
\end{equation}
How this space fits into $\PP^9\times\text{Gr}(2,4)$ is not immediately clear.  This is due to the fact that these two representations have different ``stability types'':  the maps that are allowed to be zero and those that are intertwined with each other are different in each of the representation types.  In the generic stratum, neither $\phi_1$ and $\phi_2$ can be zero while $\xi_1$ and $\xi_2$ cannot be simultaneously be zero.  In the less generic stratum, neither $\phi_1$ nor $\xi_2$ can be zero while $\phi_2$ and $\xi_1$ form an analogous pair.  The change in stability in crossing from one stratum to the other is reminiscent of a conifold transition in reductive GIT, but where the dimension need not be the same on both sides of the transition.

Finally, we consider an argyle quiver with two $(1,k,1)$ blocks.  Let 
\begin{equation}
Q=\bullet_{1,0}\longrightarrow\bullet_{2,0}\longrightarrow\bullet_{1,3}\longrightarrow\bullet_{3,-2}\longrightarrow\bullet_{1,-2}\nonumber
\end{equation}

and $t=5$.  The generic splittings $(\mathbf{a}_2,\mathbf{a}_4)$ are $\mathbf{a}_2 = (0,0)$ and $\mathbf{a}_4 = (0;-1,-1)$.  A representation with these splittings looks like
\begin{equation}
\begin{tikzpicture}

    \node (a) at (0,0){$\mathcal{O}$};
     \node[below=0.4cm of a] (w){$\oplus$};
    \node[below=1.3cm of a] (b){$\calO(-1)$};
    \node[below = 0.4cm of b](x){$\oplus$};
    \node[below=1.3cm of b](c){$\calO(-1)$};
 \node[right=1.8cm of b] (g){$\mathcal{O}(-2)$};
  \node[left=1.8cm of b] (h){$\mathcal{O}(3)$};
  
  \node[left=1.8cm of h](y){$\oplus$};
  \node[left=1.8cm of y](z){$\calO$};
  
  \node[above=0.7cm of y](d){$\calO$};
   \node[below=0.7cm of y](e){$\calO$};

 \draw[->] (a) -- (g) node[pos=0.5,above]{$\phi_1$};
  \draw[->] (b) -- (g) node[pos=0.5,above]{$\phi_2$};
   \draw[->] (c) -- (g) node[pos=0.4,above]{$\phi_3$};
  \draw[->] (h) -- (a) node[pos=0.5,above]{$\xi_1$};
  \draw[->] (h) -- (b) node[pos=0.5,above]{$\xi_2$};
    \draw[->] (h) -- (c) node[pos=0.6,above]{$\xi_3$};
    
      \draw[->] (d) -- (h) node[pos=0.5,above]{$\zeta_1$};
  \draw[->] (e) -- (h) node[pos=0.5,above]{$\zeta_2$};
     \draw[->] (z) -- (d) node[pos=0.5,above]{$\eta_1$};
  \draw[->] (z) -- (e) node[pos=0.5,above]{$\eta_2$};
    \end{tikzpicture}\nonumber
\end{equation}

Note that the left block is certainly not a stable representation of the quiver $\bullet_{1,0}\longrightarrow\bullet_{2,0}\longrightarrow\bullet_{1,3}$, but with stability condition induced by $Q$, we can calculate 
\begin{equation}
\overline{{\calM'}^\Delta_{\PP^1,\calO(5)}}(\bullet_{1,0}\longrightarrow\bullet_{2,0}\longrightarrow\bullet_{1,3},\mathbf{a}_2) = \PP^{11}\times\text{Gr}(2,9).\nonumber
\end{equation}
Similarly, 
\begin{equation}
\overline{{\calM'}^\Delta_{\PP^1,\calO(5)}}(\bullet_{1,3}\longrightarrow\bullet_{3,-2}\longrightarrow\bullet_{1,-2},\mathbf{a}_4) = \{\bullet\}\times \PP^3\times\PP^8\nonumber
\end{equation}

Here, $t$ is small enough and the quiver labelling is such that none of the other possible splittings of $U_2$ or $U_4$ correspond to stable representations.  Thus we can actually say
\begin{equation}
\calM_{\PP^1,\calO(5)}(Q) =  \PP^3\times\PP^8\times \PP^{11}\times\text{Gr}(2,9)\nonumber
\end{equation}

If we consider $t=6$ with this same quiver, we observe a stratification which is more difficult to categorize. The splitting $\mb{a}_2 = (0,0)$ is still the only stable type for $U_2$, but for $U_4$ we also have $\mb{b}_4 = (1;-1;-2)$ and $\mb{c}_4 = (0,0;-2)$ corresponding to stable representations.  We calculate
\begin{equation}
\overline{{\calM'}^\Delta_{\PP^1,\calO(6)}}(Q,\mb{a}_2,\mb{a}_4) = \PP^{13}\times\text{Gr}(2,10)\times\PP^4\times\PP^{11}\times\text{Gr}(2,3)\nonumber
\end{equation}
\begin{equation}
\overline{{\calM'}^\Delta_{\PP^1,\calO(6)}}(Q,\mb{a}_2,\mb{b}_4) = \PP^{13}\times\text{Gr}(2,10)\times\PP^1\times\PP^3\times\PP^{10}\nonumber
\end{equation}
and 
\begin{equation}
\overline{{\calM'}^\Delta_{\PP^1,\calO(6)}}(Q,\mb{a}_2,\mb{c}_4) = \PP^{13}\times\text{Gr}(2,10)\times\PP^1\times\PP^8\times\text{Gr}(2,5).\nonumber
\end{equation}

It is unclear how to glue these into the collision loci of $\overline{{\calM'}^\Delta_{\PP^1,\calO(6)}}(Q,\mb{a}_2,\mb{a}_4)$.  This is partially due to the conifold-like transition mentioned earlier and also because $\overline{{\calM'}^\Delta_{\PP^1,\calO(6)}}(Q,\mb{a}_2,\mb{b}_4)$  can be viewed as lying in a collision locus of $\overline{{\calM'}^\Delta_{\PP^1,\calO(6)}}(Q,\mb{a}_2,\mb{c}_4)$, but from the point of view of collisions in  $\overline{{\calM'}^\Delta_{\PP^1,\calO(6)}}(Q,\mb{a}_2,\mb{a}_4)$, $\overline{{\calM'}^\Delta_{\PP^1,\calO(6)}}(Q,\mb{a}_2,\mb{c}_4)$ is a special case of  $\overline{{\calM'}^\Delta_{\PP^1,\calO(6)}}(Q,\mb{a}_2,\mb{b}_4)$.

\section{Applications to twisted Higgs bundles}\label{SectHiggs}

The primary application of twisted quiver representations in a category of bundles is to the topology of Higgs bundle moduli spaces.  Rational Betti numbers have been calculated in a number of cases using the localization to fixed points of the $\CC^\times$ action.  For ordinary Higgs bundles of rank $r=2$ on a Riemann surface of genus $g\geq2$, these were calculated by Hitchin \cite{NJH:86}.  The rank $3$ and rank $4$ cases were computed in \cite{PBG:94} and \cite{GHS:14}, respectively.  In the parabolic Higgs setting on punctured Riemann surfaces, rational Poincar\'e series in low rank were computed in \cite{BY:96} and \cite{GGM:07}.  All of these calculations are largely Morse-theoretic, although \cite{GHS:14} uses moduli stacks and motivic zeta functions.	

In this context, the natural application of our results in the preceding sections (which concern representations over the projective line) is to twisted Higgs bundles at genus $0$.  In this particular setting, there are now general results on Donaldson-Thomas invariants due to Mogovogy in \cite{SM:16}, obtained by plethystic counting techniques, from which the Betti numbers can be extracted.  In comparison, the $\CC^\times$-localization tends to becomes unmanageable outside of low rank due to the number of types of fixed points.  That being said, bearing with it can reap rewards such as information on the stratification of the moduli space, as organized by the Morse flow, as well as about the structure of the cohomology ring (not to mention an abundance of finer information, such as Verlinde formulae \cite{AGP:16,DHL:16}, although this requires a much deeper analysis of the fixed-point geometry).

We denote by $\CH_t(r,d)$ the moduli space of stable twisted Higgs bundles of rank $r$ and degree $d$ on $\PP^1$, where $\gcd(r,d)=1$ (the coprime condition eliminates objects that are semistable but not stable).  In each stable Higgs bundle $(E,\Phi)$, the map $\Phi$ is an $\CO_{\PP^1}$-linear map from $E$ to $E\tensor\CO(t)$, where $t$ is a fixed positive integer. The dimension over $\CC$ of the moduli space is $tr^2+1$ \cite{NN:91}. As noted earlier, this space comes equipped with a linear algebraic action of $\CC^\times$ that sends $(E,\Phi)$ to $(E,\lambda\Phi)$.   Each fixed point of this action is a representation of the quiver $A_n$, for some $n$ with $1\leq n\leq r$, and with a labelling by pairs of integers $r_i,d_i$ in which $\sum r_i=r$ and $\sum d_i=d$ and $r_i>0$ \cite{PBG:95,SR:17}.  When $r>1$, there are no fixed points with length $n=1$, as these correspond to stable Higgs bundles with the zero Higgs field which are simply stable bundles on $\PP^1$, of which there are none other than line bundles.

The action induces a localization of cohomology to the fixed-point locus, and so the Poincar\'e series of $\CH_t(r,d)$ is the weighted sum of the Poincar\'e series of the connected components of the fixed-point set \cite{NJH:86}:\beqn \CP(r,d;x) & = & \sum_\CN x^{\beta(\CN)}\CP(\CN;x),\nonumber\eeqn\noin 
where $\CN$ denotes a connected component of the fixed point set; $\CP(\CN;x)$, the Poincar\'e series of $\CN$; and $\beta(\CN)$, the Morse index of any point in $\CN$.   The indices can be computed algebraically as dimensions of weight spaces or by using differential topology, namely Morse-Bott theory.  The former invokes the GIT interpretation of the moduli space while the latter takes a symplectic point of view.  We use the latter here, meaning that we restrict to the Hamiltonian action of a copy of $S^1\subset\CC^\times$, namely $(E,\Phi)\mapsto(E,\exp(i\theta)\Phi)$.  The procedure for computing the weights for this action on $\CM_t(r,d)$ is spelled out in \S2 of \cite{SR:17} (it is twice the sum of the numbers $\beta(E)$ and $\beta(\Phi)$ appearing there).

The initial case of interest is $r=2$ with any odd $d$ and any $t>0$.  The dimension of $\CH_t(2,d)$ is $4t+1$. There is a single quiver that controls the fixed points: $A_2$--- with nodes labelled $1,a$ and $1,d-a$, respectively.  This is an argyle quiver of type $(1,1)$, for which the moduli space is relatively simple to compute.  For any $a,d,t$, the moduli space is just $\PP^{-2a+d+t}$.   Note that there is no collision or type-change behaviour in this case, as both nodes correspond to line bundles and so $a$ and $d-a$ fix the bundles up to isomorphism.

These components of the fixed-point locus are indexed by $a$ and the admissible values of $a$ are determined by stability.  If $a$ is too large and positive, then the only morphism between the nodes will be the zero map and a copy of $\CO(a)$ will be invariant, with slope larger than $d/2$.  If $a$ is too negative, the copy of $\CO(d-a)$ will be destabilizing.  It is possible to enumerate the labelled quivers directly.  For instance, for $d=-1$, we have $\ds\left\lfloor\frac{t+1}{2}\right\rfloor$ integers $a$ such that $\CO(a)\stackrel{\phi}{\to}\CO(d-a)\to0$ is stable:
\beqn\CO\rightarrow\CO(-1)\rightarrow0\nonumber\\\CO(1)\rightarrow\CO(-2)\rightarrow0\nonumber\\\vdots\;\;\;\;\;\nonumber\\\CO(-1+\lfloor(t+1)/2\rfloor)\rightarrow\CO(-\lfloor(t+1)/2\rfloor)\rightarrow0\nonumber\eeqn

For any other odd $d$, the list will have the same number of entries, but with degrees that have been shifted appropriately.  Using the Betti numbers of $\PP^{-2a+d+t}$ for each admissible $a$, the corresponding Morse index from \cite{SR:17}, and the localization formula, we arrive at: 

\begin{theorem} For any odd $d$ and any $t>0$, we have\beqn\CP(\CH_t(2,d),x) & = &\ds\sum_{k=0}^{t-1}\left(\frac{2k+4-[(2k)\,\mbox{\emph{mod}}\,4]}{4}\right)x^{2k}.\nonumber\eeqn\end{theorem}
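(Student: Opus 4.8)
The plan is to read off $\CP(\CH_t(2,d),x)$ from the $\CC^\times$-localization formula $\CP(r,d;x)=\sum_\CN x^{\beta(\CN)}\CP(\CN;x)$ recalled above. As explained in the paragraphs leading up to the statement, when $r=2$ every fixed-point component $\CN$ is the moduli space of a type-$(1,1)$ argyle quiver $\bullet_{1,a}\longrightarrow\bullet_{1,d-a}$, and this is $\PP^{-2a+d+t}$, with $\CP(\PP^N;x)=\sum_{j=0}^{N}x^{2j}=(1-x^{2N+2})/(1-x^2)$. So the first step is to enumerate the admissible labels $a$. Stability forces the morphism $\phi$ to be nonzero (otherwise $\CO(a)$ or $\CO(d-a)$ destabilizes), and once $\phi\neq0$ the only proper nonzero $\Phi$-invariant subrepresentation is $(0,\CO(d-a))$, of slope $d-a$; combining $d-a<d/2$ with $h^0(\CO(-2a+d+t))>0$ gives $d/2<a\le(d+t)/2$. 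Since $d$ is odd I would write $a=\tfrac{d+1}{2}+\ell$ with $\ell\in\ZZ_{\geq0}$, so that $0\le\ell\le\lfloor(t-1)/2\rfloor$ and the corresponding component is $\PP^{\,t-1-2\ell}$; in particular the list of components, and hence the answer, is independent of the odd integer $d$, as claimed.

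The substance is the Morse index $\beta(\ell)$ of the component indexed by $\ell$, for which I would either invoke the recipe of \cite{SR:17} (the index being $2(\beta(E)+\beta(\Phi))$) or compute directly with the deformation complex. In the latter approach one decomposes the tangent space $\HH^1$ of the Higgs deformation complex $\mathrm{End}(E)\xrightarrow{\mathrm{ad}\,\Phi}\mathrm{End}(E)\otimes\CO(t)$ at the fixed point $E=\CO(a)\oplus\CO(d-a)$ into $\CC^\times$-weight spaces, normalizing the grading so that $\mathrm{ad}\,\Phi$ has weight $0$ (equivalently, shifting the weight of the degree-one term down by $1$). A short cohomology count on $\PP^1$ --- using $h^0,h^1$ of $\CO(\pm(1+2\ell))$, $\CO(t-1-2\ell)$ and $\CO(t)\oplus\CO(t)$, together with the injectivity of multiplication by $\phi\neq0$ --- gives $\dim\HH^1=(t-1-2\ell)+2\ell+(2t-2\ell)+(t+2\ell+2)$, the four summands being the weight-$0,+1,-1,-2$ pieces; these total $4t+1$, the weight-$0$ piece recovers $\dim\PP^{\,t-1-2\ell}$ as it must, and the positive-weight (upward-flow) part has dimension $2\ell$. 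Hence $\beta(\ell)=2\cdot2\ell=4\ell$; as a consistency check, the component $\PP^{t-1}$ at $\ell=0$ is the absolute minimum and receives index $0$.

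Putting these together yields $\CP(\CH_t(2,d),x)=\sum_{\ell=0}^{\lfloor(t-1)/2\rfloor}x^{4\ell}\sum_{j=0}^{t-1-2\ell}x^{2j}$, and it remains to collect terms. Writing $k=2\ell+j$, for a fixed $k$ with $0\le k\le t-1$ the pairs $(\ell,j)$ contributing $x^{2k}$ are precisely those with $0\le\ell\le\lfloor k/2\rfloor$: the constraints $j\ge0$ and $j\le t-1-2\ell$ are equivalent to $\ell\le\lfloor k/2\rfloor$ and $k\le t-1$, and then $\ell\le\lfloor(t-1)/2\rfloor$ is automatic. Thus the coefficient of $x^{2k}$ is $\lfloor k/2\rfloor+1$, and a final elementary check --- splitting into $k$ even, where both sides equal $\tfrac{k+2}{2}$, and $k$ odd, where both equal $\tfrac{k+1}{2}$ --- identifies $\lfloor k/2\rfloor+1$ with $\tfrac{2k+4-[(2k)\bmod4]}{4}$, giving the stated series.

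The main obstacle is the Morse index computation: one must handle the $\CC^\times$-weight bookkeeping carefully --- in particular the grading shift on the deformation complex and the sign convention under which the index equals twice the \emph{positive}-weight dimension (forced here by the requirement that the largest component $\PP^{t-1}$ sit at index $0$). Everything else --- the stability enumeration, the identification of each component with a projective space (already available from the earlier sections), and the final combinatorial bookkeeping --- is routine.
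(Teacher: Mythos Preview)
Your proof is correct and follows the same approach the paper sketches: enumerate the $(1,1)$-type fixed-point components, identify each as $\PP^{t-1-2\ell}$, compute the Morse index, and apply the localization formula. The paper does not write out a proof beyond pointing to the list of admissible $a$'s and citing \cite{SR:17} for the indices; you supply the details, in particular the weight decomposition of the deformation complex (which checks out: your four weight pieces of dimensions $t-1-2\ell$, $2\ell$, $2t-2\ell$, $t+2\ell+2$ are exactly the hypercohomologies of the weight-graded subcomplexes, summing to $4t+1$) and the final combinatorial reindexing to $\lfloor k/2\rfloor+1$.

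One cosmetic remark: your justification that the index equals twice the \emph{positive}-weight dimension is pinned down only by the consistency check that $\ell=0$ must sit at index $0$. That is fine for the purposes of this computation, but if you want to avoid the appearance of reverse-engineering the sign, you can note directly that the weight-$(+1)$ piece $H^1(\CO(-1-2\ell))$ parametrizes deformations of the underlying bundle $E$ toward the \emph{less} balanced splitting types, which are exactly the downward-flow directions for $\|\Phi\|^2$ (equivalently, the repelling directions for the $\lambda\to 0$ Bia\l{}ynicki--Birula flow); this identifies the sign without appeal to the answer.
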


The even Betti numbers are $1,1,2,2,3,3,4,4,\dots$ up to $(t-1)/2,(t+1)/2$ if $t$ is odd or $t/2,t/2$ if $t$ is even.  From a combinatorial point of view, these count partitions of even integers into unordered combinations of the numbers $2$ and $4$, i.e. the ``change-making problem''.  To emphasize this, one can rewrite the series as$$\CP(\CH_t(2,d),x)=\frac{1}{(1-x^2)(1-x^4)} - \left\{\frac{(\lfloor t/2 \rfloor + 1)x^{2t}}{1-x^2} + \frac{x^{4 \lfloor t/2 \rfloor + 4}}{(1-x^2)(1-x^4)}\right\},$$which displays more of the structure regarding the generators and relations in the cohomology ring  (These results in the $t=2$ or ``co-Higgs bundle'' case were found in \cite{SR:13}).

In the rank $3$ case, the quiver types are now $(1,1,1)$, $(1,2)$, and $(2,1)$, all of which are argyle.  In this case, we must contend with collisions, which makes writing down a general Poincar\'e series cumbersome.  We provide two examples, one without type change and one with.

For the first example, we consider $\CH_2(3,-1)$, seen also in \cite{SR:13}.  The complex dimension of the moduli space is $19$ in this case.  As with $r=2$, the fixed-point set consists entirely of representations of argyle quivers, the types being $(1,1,1)$, $(2,1)$, and $(1,2)$. Stability rapidly eliminates any of type $(1,2)$.  For type $(1,1,1)$, there are three degree labellings that produce stable representations:$$1,0,-2;\;\;1,-1,-1;\;\mbox{ and }0,0,-1,$$which have Morse indices of $6,4$, and $2$ respectively. Again, there are no type-changing collisions possible because the bundles are line bundles and are therefore fixed up to isomorphism by these degree labellings.  By Theorem \ref{Thm1k1}, the associated quiver varieties are$$\PP^{-1+0+2}\times\PP^{-0-2+2},\;\PP^{-1-1+2}\times\PP^{1-1+2},\;\mbox{ and }\PP^{-0+0+2}\times\PP^{0-1+2},$$respectively.  For type $(2,1)$, there is a single degree labelling that admits stable representations: $0,-1$, which has Morse index $0$ (and so we are at the ``bottom'' of the moduli space).  We can deduce from the arguments leading to Theorem \ref{Thm1k1} that the associated quiver variety is just a point.  More directly, the representation $\phi:\CO\plus\CO\rightarrow\CO(-1)\tensor\CO(2)$ is stable if and only if it is onto, in which case the induced map $\widetilde\phi$ between spaces of global sections must have full rank.  Acting on this copy of $\mb{GL}(2,\CC)$ on the right by automorphisms of $\CO\plus\CO$ leaves nothing save for the identity.  Weaving together this information with the localization formula, we obtain$$\CP(\CH_2(3,-1),x)=1+x^2+3x^4+4x^6+3x^8.$$ As with the $r=2$ case, the top degree is decidedly less than the actual dimension of the moduli space.  This is due to  the contribution to the moduli space of the Hitchin base; the space of possible coefficients of the characteristic polynomial of $\Phi$, which itself is topologically trivial.  The moduli space itself deformation retracts onto the central fibre over the base.

Finally, we consider $\CH_6(3,-1)$.  The basic types are the same ($(1,1,1)$, $(2,1)$, and $(1,2)$) but type-change phenomena occur. Here, the complex dimension of the moduli space is $55$. For type $(2,1)$, the labellings $(0,-1)$, $(1,-2)$, and $(2,-3)$ produce stable representations with Morse indices $0,4$, and $12$ respectively. The variety corresponding to the labelling $(0,-1)$ is $\text{Gr}(2,6)$ and that corresponding to $(1,-2)$ is $\PP^3\times\PP^2$. Each of these labellings has only one splitting of the left node that corresponds to stable representations.  The same is not true of the labelling $(2,-3)$, where we contend with type-change phenomena. We have both
\begin{equation}
\begin{tikzpicture}

    \node (a) at (0,0){$\mathcal{O}(1)$};
     \node[below=0.4cm of a] (b){$\oplus$};
    \node[below=1.3cm of a] (c){$\calO(1)$};
 \node[right=1.5cm of b] (d){$\mathcal{O}(-3)$};

 \draw[->] (a) -- (d) node[pos=0.5,above]{$\phi_1$};
  \draw[->] (c) -- (d) node[pos=0.5,above]{$\phi_2$};
  
   \node[right=0.7cm of d] (e){and};
   
   \node[right=5 of a] (f){$\mathcal{O}(2)$};
     \node[below=0.4cm of f] (g){$\oplus$};
    \node[below=1.3cm of f] (h){$\calO$};
 \node[right=1.5cm of g] (i){$\mathcal{O}(-3)$};

 \draw[->] (f) -- (i) node[pos=0.5,above]{$\phi'_1$};
  \draw[->] (h) -- (i) node[pos=0.45,above]{$\phi'_2$};
   
    \end{tikzpicture}\nonumber
\end{equation}
  The quiver variety of the first is $\text{Gr}(2,3)\cong \PP^2$ and the quiver variety of the second is $\PP^1$.  The locus of  $\PP^2$ where $\phi_1$ and $\phi_2$ share a zero is a copy of $\PP^1$.  We remove this and paste in the second variety, which is just $\PP^1$ again.  So in this case we have that the moduli space correponding to this labelling of a type $(2,1)$ quiver is $\PP^2$.  In addition, we have two stable labellings of the $(1,2)$-type quiver, $(1,-2)$ and $(2,-3)$ with respective Morse indices $4$ and $10$.  The labelling $(1,-2)$ has associated quiver variety $\text{Gr}(2,4)$, and $(2,-3)$ has $\PP^2\times\PP^1$.  Finally, we have the following allowed labellings for the $(1,1,1)$ quiver type:
  \begin{equation*}\begin{split}&0,0,-1;\;\;0,1,-2;\;\;1,-1,-1;\;\;0,2,-3;\;\;1,0,-2;\;\;2,-2,-1;\;\;1,1,-3;
  \\
  &2,-1,-2;\;\;3,-3,-1;\;\;1,2,-4;\;\;2,0,-3;\;\;3,-2,-2;\;\;2,1,-4;\;\;3,-1,-3;;
  \\
  &3,0,-4;\;\;4,-2,-3;\;\;3,1,-5;\;\;4,-1,-4;\;\;4,0,-5;\;\;5,-1,-5;\mbox{ and }5,0,-6.
  \end{split}\end{equation*}
  These have Morse indices \begin{equation*}10,12,12,14,14,14,16,16,16,18,18,18,20,20,22,22,24,24,26,28\mbox{ and }30\end{equation*} respectively, and associated quiver varieties
    \begin{equation*}\begin{split}&\PP^6\times\PP^5,\;\PP^7\times\PP^3,\;\;\PP^4\times\PP^6,\;\;\PP^8\times\PP^1,\;\;\PP^5\times\PP^4,\;\;\PP^2\times\PP^7,\;\;\PP^6\times\PP^2,
  \\
  &\PP^3\times\PP^5,\;\;\PP^8,\;\;\PP^7,\;\;\PP^4\times\PP^3,\;\;\PP^1\times\PP^6,\;\;\PP^5\times\PP^5,\;\;\PP^2\times\PP^4,
  \\
  &\PP^3\times\PP^2,\;\;\PP^5;\;\;\PP^4,\;\;\PP^1\times\PP^3,\;\;\PP^2\times\PP^1,\;\;\PP^2,\mbox{ and }\PP^1.
  \end{split}\end{equation*}
  We can bring all of this together to calculate
  \begin{equation*}\begin{split}
  \CP(\CH_6(3,-1),x) &= 1+x^2+3x^4+4x^6+7x^8+9x^{10}+14x^{12}+17x^{14}+24x^{16}+29x^{18}
 \\
 &\qquad+38x^{20}5x^{22}+49x^{24}+49x^{26}+45x^{28}+36x^{30}+21x^{32}.
 \end{split} \end{equation*}
After $r=3$, $\CH_t(r,d)$ will always contain topological contributions from at least one $A$-type quiver of non-argyle type.  For instance, $r=4$ contains a $(2,2)$ quiver variety, which was for some time the obstruction to computing Betti numbers for ordinary Higgs bundles in higher genus before \cite{GHS:14}.  On $\PP^1$, the $(2,2)$ quiver is not so formidable and, with some effort, one can find
\beqn\begin{split}
\CP(\CH_2(4,d),x) &= 1+x^2+3x^4+5x^6+9x^8+13x^{10}+18x^{12}
\\
&\qquad+22x^{14}+20x^{16}+10x^{18},
\nonumber\end{split}\eeqn
 for instance, where $d$ is any integer coprime to $4$.  We remark finally that all of the above calculations agree with the conjectural Poincar\'e series for these moduli spaces arising from the ADHM recursion formula \cite{SM:12}. 

\bibliographystyle{acm} 
\bibliography{ArgyleQuivers}

\end{document}